\theoremstyle{plain}
\newtheorem{thm}{Theorem}[section]
\newtheorem{lem}[thm]{Lemma}
\newtheorem{cor}[thm]{Corollary}
\newtheorem{prop}[thm]{Proposition}
\newtheorem*{maina}{Theorem A}
\newtheorem*{mainb}{Theorem B}
\theoremstyle{definition}
\newtheorem{dfn}[thm]{Definition}
\DeclareMathOperator{\aug}{aug}
\DeclareMathOperator{\coker}{coker}
\DeclareMathOperator{\ev}{ev}
\DeclareMathOperator{\im}{im}
\DeclareMathOperator{\Hom}{Hom}
\def\twist{\mathbf{\cdot}}
\title{Poincar{\'{e}} Duality Complexes with Highly Connected Universal Covers}
\author{B.Bleile and I.Bokor}
\date{September, 2015}
\begin{document}

\maketitle

\begin{abstract}

\noindent Baues and Bleile showed that, up to oriented homotopy equivalence, a Poincar{\'{e}} duality complex of dimension $n \ge 3$ with $(n-2)$-connected universal cover, is classified by its fundamental group, orientation class and the image of its fundamental class in the homology of the fundamental group. We generalise Turaev's results for the case $n=3$, by providing necessary and sufficient conditions for a triple $(G, \omega, \mu)$, comprising a group, $G$, $\omega \in H^{1}\left(G; \raisebox{0.3ex}{$\mathbb{Z}$}/\raisebox{-0.2ex}{$2\mathbb{Z}$}\right)$ and $\mu \in H_{3}(G; \mathbb{Z}^{\omega})$ to be realised by a Poincar{\'{e}} duality complex of dimension $n$ with $(n-2)$-connected universal cover, and  by showing that such a complex is a connected sum of two such complexes if and only if its fundamental group is a free product of groups.
\end{abstract}

\section{Introduction}\label{sect:intro}

Important properties of manifolds, especially global ones such as \emph{Poincar{\'{e}} duality}, depend only on their homotopy properties. This realisation led to the notion of \emph{Poincar{\'{e}} duality complexes} as the homotopy theoretic generalisation of manifolds, opening the possibility of classifying manifolds using a classification of Poincar{\'{e}} duality complexes.

Since not every Poincar{\'{e}} duality complex arises from a manifold, such complexes are genuinely more general. In this case, the greater generality is not an additional complication, but a convenience, for Poincar{\'{e}} duality complexes are accessible to algebraic analysis, thereby allowing more computation. This has been successfuly exploited by a number of authors to achieve partial classification using algebraic invariants.

In particular, Hendricks classified Poincar{\'{e}} duality complexes of dimension $3$ (\emph{PD$^{3}$-complexes}) up to oriented homotopy equivalence using the ``fundamental triple''\!,  comprising  the fundamental group, the orientation character and the image of the fundamental class in the cohomology of the fundamental group.

Turaev (\cite{VGT1990}) gave an alternative proof of Hendricks' result and   provided necessary and sufficient conditions for a triple $(G, \omega, \mu)$, consisting of a group, $G$, a cohomology class, $\omega \in H^{1}\!\left(G ; \raisebox{0.3ex}{$\mathbb{Z}$}/\raisebox{-0.2ex}{$2\mathbb{Z}$}\right)$, and a homology class, $\mu \in H_{3}(G ; \mathbb{Z}^{\omega})$, where $\mathbb{Z}^{\omega}$ denotes the integers, $\mathbb{Z}$, regarded as a right $\mathbb{Z}[G]$-module with respect to the \emph{twisted} structure induced by $\omega$, to be fundamental triple of a PD$^{3}$-complex with simply connected universal covering space. Central to this was that applying a specific function, which we call \emph{the Turaev map}, to $\mu$, yield isomorphism in the stable category of $\mathbb{Z}[G]$, that is to say, a homotopy equivalence of $\mathbb{Z}[G]$-modules. 

Turaev conjectured that his result holds for all PD$^{n}$-complexes with $(n-2)$-connected universal cover.

Baues and Bleile  classified Poincar{\'{e}} duality complexes of dimension 4 in \cite{HJB-BB2010}. Their analysis showed that a Poincar{\'{e}} duality complex, $X$,  of dimention $n\ge 3$, is classified up to oriented homotopy equivalence by the triple comprising its $(n-2)$-type $P_{n-2}(X)$, its orientation character $\omega_{X} \in H^{1}\left(\pi_{1}(X) ; \,  \raisebox{0.3ex}{$\mathbb{Z}$}/\raisebox{-0.2ex}{$2\mathbb{Z}$}\right)$ and its fundamental class, $[X] \in H_{n}(X; \mathbb{Z}^{\omega_{X}})$.

They called this the \emph{fundamental triple of $X$}, as it is a generalisation of Hendricks' ``fundamental triple'', for the $(n-2)$ type of $X$ determines its fundamental group. Moreover, when the universal cover of the complex is $(n-2)$-connected --- automatically the case when $n=3$  ---  the $(n-2)$ section is an Eilenberg-Mac Lane space of type $(\pi_{1}(X),1)$, so that the $(n-2)$ type  and the fundamental group determine each other completely, reducing their fundamental triple to that of Hendricks.

It follows that two PD$^{n}$-complexes with $(n-2)$-connected universal cover are homotopically equivalent if and only if their fundamental triples are isomorphic, verifying the first part of Turaev's conjecture. 

Our main results complete a proof of Turaev's conjecture, which we formulate in two theorems.

Recall that the group $G$ is of type FP$_{n}$ if and only if the trivial $\mathbb{Z}[G]$-module $\mathbb{Z}$ has a projective resolution, $\mathbf{P}$, with $P_{j}$ finitely generated for $j \le n$.

\begin{maina}
  Let $G$ be a finitely presentable group, $\omega$ a cohomology class in $H^{1}\left(G; \, \raisebox{0.5ex}{$\mathbb{Z}$}/\raisebox{-0.3ex}{$2\mathbb{Z}$}\right)$, and $\mu$ a homology class in $H_{n}(G; \mathbb{Z}^{\omega})$.
  
  If $G$ is of type FP$_{n}$ with $H^{i}(G; \,^{\omega}\! \mathbb{Z}[G]) =0$ for $1 < i < n-1$, and  $n \ge 3$, then $(G, \omega, \mu)$ can be realised as the fundamental triple of a PD$^{n}$-complex with $(n-2)$-connected universal cover if and only if the Turaev map applied to $\mu$ is an isomorphism in the stable category of $\mathbb{Z}[G]$.
\end{maina}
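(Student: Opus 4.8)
The plan is to argue both directions through the cellular chain complex of the universal cover, using the vanishing hypothesis to concentrate Poincar\'e duality in the two top degrees. Write $\Lambda=\mathbb{Z}[G]$ with its $\omega$-twisted involution and, for a finitely generated left $\Lambda$-module $M$, let $M^{\ast}=\Hom_{\Lambda}(M,\Lambda)$, a left module via the involution. Since $G$ is finitely presentable and of type FP$_{n}$, I would fix a free resolution $F_{\bullet}\to\mathbb{Z}$ arising from a finite presentation in low degrees, with $F_{0}=\Lambda$ and each $F_{i}$ ($i\le n$) finitely generated free, and set $Z=\ker(\partial_{n-1}\colon F_{n-1}\to F_{n-2})$, the $(n-1)$st syzygy. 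Realising the truncation $F_{n-1}\to\dots\to F_{0}$ geometrically --- the presentation $2$-complex of $G$ through degree $2$, then attaching cells of dimensions $3,\dots,n-1$ along the boundary maps, as in Wall's realisation of chain complexes by CW complexes --- produces a finite $(n-1)$-complex $Y$ with $\pi_{1}(Y)=G$, $C_{\bullet}(\widetilde{Y})\cong(F_{n-1}\to\dots\to F_{0})$, hence $\widetilde{Y}$ is $(n-2)$-connected and $\pi_{n-1}(Y)\cong Z$. I would then recall that $\mu\in H_{n}(G;\mathbb{Z}^{\omega})=H_{n}(\mathbb{Z}^{\omega}\otimes_{\Lambda}F_{\bullet})$ is represented by a cycle $z\in\mathbb{Z}^{\omega}\otimes_{\Lambda}F_{n}$, that slant product with $z$ yields a chain map $\lambda_{\mu}$ from the $\omega$-twisted dual complex $\overline{F^{\,n-\bullet}}$ to $F_{\bullet}$ covering $\mathrm{id}_{\mathbb{Z}}$, and that the Turaev map applied to $\mu$, call it $\tau(\mu)$, is the class of $\lambda_{\mu}$ in the stable category. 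The hypothesis $H^{i}(G;{}^{\omega}\!\Lambda)=0$ for $1<i<n-1$ says exactly that $\overline{F^{\,n-\bullet}}$ is acyclic in degrees $2,\dots,n-2$; consequently, up to chain homotopy $\lambda_{\mu}$ is governed by --- and $\tau(\mu)$ is a stable isomorphism if and only if --- the $\Lambda$-module homomorphism it induces between the modules controlling the two critical degrees $n-1$ and $n$ is an isomorphism. (This vanishing is in fact forced: any PD$^{n}$-complex $X$ realising the triple with $\widetilde{X}$ $(n-2)$-connected has $H^{i}(X;{}^{\omega}\!\Lambda)\cong H_{n-i}(\widetilde{X})=0$ for $1<i<n$, while $H^{i}(X;{}^{\omega}\!\Lambda)\cong H^{i}(G;{}^{\omega}\!\Lambda)$ for $i\le n-2$.)

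For the \emph{if} direction I would assume $\tau(\mu)$ is an isomorphism in the stable category of $\Lambda$ and promote it to geometry. A stable isomorphism of finitely generated modules is, after stabilising both sides by finitely generated free summands, represented by a genuine $\Lambda$-isomorphism; stabilising $Z\cong\pi_{n-1}(Y)$ by a free summand corresponds to replacing $Y$ by $Y\vee\bigvee S^{n-1}$ (again an $(n-1)$-complex with $(n-2)$-connected cover), and the resulting data prescribe a homomorphism $\alpha\colon\Lambda^{k}\to\pi_{n-1}(Y\vee\bigvee S^{n-1})$. Attaching $k$ $n$-cells along a map inducing $\alpha$ produces a finite $n$-complex $X$ with $\pi_{1}(X)=G$, orientation character $\omega$, with $\widetilde{X}$ still $(n-2)$-connected (only $(n-1)$- and $n$-cells were introduced), and with $C_{\bullet}(\widetilde{X})$ equal to $(F_{n-1}\to\dots\to F_{0})$ together with a copy of $\Lambda^{k}$ adjoined in degree $n$ with differential $\alpha$. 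The cycle $z$ promotes to a cycle $\zeta\in\mathbb{Z}^{\omega}\otimes_{\Lambda}C_{n}(\widetilde{X})$ whose image under the classifying map is $\mu$; I would then verify, by a mapping-cone and five-lemma argument resting on the acyclicity of $F_{\bullet}$ in positive degrees together with the middle-degree acyclicity above, that the chain-level cap product $-\cap\zeta\colon\overline{C}^{\,n-\bullet}(\widetilde{X})\to C_{\bullet}(\widetilde{X})$ is a chain homotopy equivalence precisely because $\tau(\mu)$ is a stable isomorphism. Then $X$ is a PD$^{n}$-complex with $(n-2)$-connected universal cover and fundamental triple $(G,\omega,\mu)$.

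For the \emph{only if} direction, given such an $X$ I would take $X$ finite, so that $C_{\bullet}(\widetilde{X})$ is a complex of finitely generated free modules of length $n$ with $H_{0}=\mathbb{Z}$ and $H_{i}=0$ for $0<i<n-1$; choosing a chain map $F_{\bullet}\to C_{\bullet}(\widetilde{X})$ over $\mathrm{id}_{\mathbb{Z}}$ (it exists since $F_{\bullet}$ is a resolution, and is a homology isomorphism through degree $n-2$) identifies $C_{\bullet}(\widetilde{X})$, up to chain homotopy, with a modification of $F_{\bullet}$ in degrees $n-1$ and $n$ alone. Poincar\'e duality provides a chain homotopy equivalence $-\cap[X]\colon\overline{C}^{\,n-\bullet}(\widetilde{X})\to C_{\bullet}(\widetilde{X})$; transporting it across the chosen chain map and applying the middle-degree vanishing identifies the homomorphism it induces in degrees $n-1,n$ with $\tau(\mu)$, since $\mu=c_{X\ast}[X]$. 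Hence $\tau(\mu)$ is an isomorphism in the stable category.

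The hard part will be the \emph{if} direction: turning the abstract statement that $\tau(\mu)$ is a stable isomorphism into explicit attaching data, and in particular controlling the discrepancy between stably free and free modules --- the source of the auxiliary $S^{n-1}$ wedge summands, which must be arranged so as not to upset the Euler-characteristic bookkeeping that Poincar\'e duality imposes --- and then checking that the cap-product chain map of the resulting complex is a homotopy equivalence. The vanishing hypothesis $H^{i}(G;{}^{\omega}\!\Lambda)=0$ for $1<i<n-1$ is exactly what reduces this last check to inverting the single homomorphism $\tau(\mu)$ in the two critical degrees; for $n=3$ that range of degrees is empty and the whole argument specialises to Turaev's.
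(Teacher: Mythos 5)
Your overall strategy matches the paper's: both directions are run through the cellular chain complex of the universal cover, the sufficiency direction builds an $n$-complex by attaching $n$-cells to an $(n-1)$-complex realising the truncated group resolution, and the necessity direction transports the cap-product duality across a chain map to the group resolution. However, there is a genuine gap in the step you rightly flag as the hard part. You assert that ``a stable isomorphism of finitely generated modules is, after stabilising both sides by finitely generated free summands, represented by a genuine $\Lambda$-isomorphism.'' This is not true as stated: the general factorisation (Theorem 4.1 and Observation 1 of \cite{BB2010}, which the paper invokes) gives a diagram $F^{n-1}\big(\mathbf{C}(\widetilde{K'})\big)\rightarrowtail F^{n-1}\big(\mathbf{C}(\widetilde{K'})\big)\oplus\Lambda^{m}\rightarrowtail I\oplus P\twoheadrightarrow I$ in which the stabilising summand on the left is free but the one on the right is only projective. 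Your proposed remedy---wedging on copies of $S^{n-1}$---only produces free stabilisation on the domain, exactly as in the paper's first step, and does nothing to make $P$ free. The paper therefore splits into two cases: when $P$ is free, the direct construction via a homotopy system of order $n$ and Baues--Bleile Proposition 8.3 applies; when $P$ is projective but not free, one needs an Eilenberg-swindle argument ($(\Lambda\oplus P)^{\ast}\oplus\Lambda^{\infty}\cong\Lambda^{\infty}$) producing an infinite complex, followed by Wall's finiteness theorem (Theorem 2 of \cite{W1966}) to descend to a finite $CW$-complex. That second case is missing from your outline and cannot be elided.

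Two smaller points. First, even in the free case, the claim that the constructed complex has fundamental class mapping to $\mu$ does not follow simply from ``the cycle $z$ promotes''; the paper establishes $f_{\ast}([X])=\mu$ by computing that $\nu_{\mathbf{C}(\widetilde{X}),n-1}([X])$ is precisely the class $[h]=\nu_{\mathbf{C}(\widetilde{K}),n-1}(\mu)$ and then invoking the injectivity of the Turaev map (Lemma 2.5 in \cite{VGT1990}). You will need that injectivity. Second, the verification that cap product with the new fundamental class is a chain homotopy equivalence is not a single five-lemma application: with the middle degrees killed by the hypothesis $H^{i}(G;{}^{\omega}\!\Lambda)=0$, the paper still has to check duality separately in degrees $0$, $n-1$ and $n$ (Proposition \ref{prop:realisefree}(iii)), using the explicit structure of $d_{n}=({}^{\omega}\varepsilon)^{-1}\circ\varphi^{\ast}$, an explicit generator of $H^{n}(X;{}^{\omega}\!\Lambda)$, and Lemma 2.1 of \cite{BB2010} to pass between a chain map and its dual. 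Your sketch of the necessity direction, by contrast, is essentially the paper's argument (Lemma \ref{lem:nu} plus Theorem \ref{thm:necforrealisation}) in compressed form.
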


\begin{mainb}
A PD$^{n}$-complex with $(n-2)$-connected universal cover, decomposes as  connected sum if and only if its fundamental group decomposes as a free product of groups.
\end{mainb}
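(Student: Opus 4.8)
The plan is to run everything through the homotopy classification of such complexes by their fundamental triples (Baues--Bleile) together with the realisation criterion of Theorem A. The forward implication is the routine one: if $X \simeq X_{1} \# X_{2}$, where each $X_{i}$ is a PD$^{n}$-complex with $(n-2)$-connected universal cover and $X_{i} \not\simeq S^{n}$, write $X_{i}^{\circ}$ for $X_{i}$ with a top cell removed; then $X_{i}^{\circ}$ is homotopy equivalent to an $(n-1)$-complex with $\pi_{1}X_{i}^{\circ} \cong \pi_{1}X_{i}$, and van Kampen applied to $X = X_{1}^{\circ}\cup_{S^{n-1}}X_{2}^{\circ}$ together with $\pi_{1}S^{n-1}=1$ (as $n\ge 3$) gives $\pi_{1}X \cong \pi_{1}X_{1}*\pi_{1}X_{2}$. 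Since Hurewicz and duality force a simply connected PD$^{n}$-complex with $(n-2)$-connected universal cover to be equivalent to $S^{n}$, neither factor group is trivial, so $\pi_{1}X$ is a nontrivial free product.

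For the converse, let $(\,G,\omega,\mu\,)$ be the fundamental triple of $X$, with $G=\pi_{1}X$ and $\mu\in H_{n}(G;\mathbb{Z}^{\omega})$, and suppose $G = G_{1}*G_{2}$ with $G_{i}\ne 1$. Put $\omega_{i}=\omega|_{G_{i}}$. Since $BG\simeq BG_{1}\vee BG_{2}$, for $n\ge 2$ the inclusions $j_{i}\colon G_{i}\hookrightarrow G$ induce an isomorphism $H_{n}(G_{1};\mathbb{Z}^{\omega_{1}})\oplus H_{n}(G_{2};\mathbb{Z}^{\omega_{2}})\xrightarrow{\ \cong\ } H_{n}(G;\mathbb{Z}^{\omega})$, so $\mu = j_{1*}\mu_{1}+j_{2*}\mu_{2}$ for unique $\mu_{i}\in H_{n}(G_{i};\mathbb{Z}^{\omega_{i}})$. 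I would then check that each $(\,G_{i},\omega_{i},\mu_{i}\,)$ meets the hypotheses of Theorem A. Finite presentability and type FP$_{n}$ pass to free factors, so these hold for $G_{i}$ because they hold for $G$ (the latter since $X$ is a PD$^{n}$-complex with highly connected cover). For the vanishing $H^{j}(G_{i};{}^{\omega_{i}}\mathbb{Z}[G_{i}])=0$, $1<j<n-1$: the Mayer--Vietoris sequence of the free product yields $H^{j}(G;M)\cong H^{j}(G_{1};M)\oplus H^{j}(G_{2};M)$ for $j\ge 2$ and any $\mathbb{Z}[G]$-module $M$; taking $M={}^{\omega}\mathbb{Z}[G]$, the left side vanishes in this range because $X$ is PD$^{n}$ with $(n-2)$-connected cover, and since $\mathbb{Z}[G]$ is free over $\mathbb{Z}[G_{i}]$ and $G_{i}$ is FP$_{n}$, $H^{j}(G_{i};{}^{\omega_{i}}\mathbb{Z}[G])$ is a direct sum of copies of $H^{j}(G_{i};{}^{\omega_{i}}\mathbb{Z}[G_{i}])$, forcing the latter to vanish.

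The crux is the remaining hypothesis: that the Turaev map applied to $\mu_{i}$ is an isomorphism in the stable category of $\mathbb{Z}[G_{i}]$. Because $X$ realises $(\,G,\omega,\mu\,)$, Theorem A (or a direct computation with the cellular chains of $\widetilde{X}$) tells us the Turaev map applied to $\mu$ is a stable isomorphism over $\mathbb{Z}[G]$. I would then choose a resolution of $\mathbb{Z}$ over $\mathbb{Z}[G]$ adapted to the free product, obtained by splicing the induced-up resolutions $\mathbb{Z}[G]\otimes_{\mathbb{Z}[G_{i}]}\mathbf{P}^{(i)}$ along the Bass--Serre short exact sequence
\[
0 \longrightarrow \mathbb{Z}[G] \longrightarrow \bigl(\mathbb{Z}[G]\otimes_{\mathbb{Z}[G_{1}]}\mathbb{Z}\bigr)\oplus\bigl(\mathbb{Z}[G]\otimes_{\mathbb{Z}[G_{2}]}\mathbb{Z}\bigr) \longrightarrow \mathbb{Z} \longrightarrow 0 ,
\]
with respect to which the Turaev construction for $(\,G,\omega,\mu\,)$ is identified with the direct sum of the base-changed Turaev constructions for $(\,G_{1},\omega_{1},\mu_{1}\,)$ and $(\,G_{2},\omega_{2},\mu_{2}\,)$. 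Finally, restriction along $G_{i}\hookrightarrow G$ carries $\mathbb{Z}[G]\otimes_{\mathbb{Z}[G_{i}]}(-)$ to a direct sum of copies of the identity functor, one of which is a retract, so the induced functor on stable categories reflects isomorphisms; hence the Turaev map applied to each $\mu_{i}$ is a stable isomorphism over $\mathbb{Z}[G_{i}]$. Making the Turaev map commute on the nose with this splicing and the change of rings is the main obstacle; once it is in place, Theorem A produces PD$^{n}$-complexes $X_{i}$ with $(n-2)$-connected universal covers realising $(\,G_{i},\omega_{i},\mu_{i}\,)$.

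To conclude, form $Y = X_{1}\# X_{2}$. A Mayer--Vietoris argument on its universal cover (a tree of copies of the $\widetilde{X_{i}^{\circ}}$ glued along copies of $S^{n-1}$) shows $\widetilde{Y}$ is $(n-2)$-connected; van Kampen gives $\pi_{1}Y\cong G$; the orientation characters of the summands patch to $\omega$; and the fundamental class of a connected sum maps to $j_{1*}\mu_{1}+j_{2*}\mu_{2}=\mu$ in $H_{n}(G;\mathbb{Z}^{\omega})$. Thus $Y$ has fundamental triple $(\,G,\omega,\mu\,)$, and the Baues--Bleile classification forces $X\simeq Y = X_{1}\# X_{2}$.
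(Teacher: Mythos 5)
Your proposal follows the same strategy as the paper: split $\mu = \mu_{1}+\mu_{2}$ along the free product $G = G_{1}\ast G_{2}$ (using $K(G,1)\simeq K(G_{1},1)\vee K(G_{2},1)$), identify the Turaev map for $(G,\omega,\mu)$ with the block-diagonal sum of induced-up Turaev maps $L_{1}(\varphi_{1})\oplus L_{2}(\varphi_{2})$, deduce that each $\varphi_{j}$ is a stable isomorphism over $\mathbb{Z}[G_{j}]$, realise each $(G_{j},\omega_{j},\mu_{j})$ by Theorem~A, form the connected sum, and conclude via the Baues--Bleile classification. The one place your write-up genuinely differs in texture is the crux: where the paper simply cites Turaev's argument (pp.~269--270 of \cite{VGT1990}) for the step that each $\varphi_{j}$ is a homotopy equivalence, you supply a self-contained restriction-is-a-retract argument to see that $\mathbb{Z}[G]\otimes_{\mathbb{Z}[G_{j}]}(-)$ reflects stable isomorphisms, and you also explicitly check that the FP$_{n}$ and $H^{i}$-vanishing hypotheses of Theorem~A pass to the free factors, which the paper leaves implicit.
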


Section \ref{sect:notation} summarises background material and fixes notation.

Section \ref{sect:necessity} contains the formulation and proof of the necessity of the condition for the realisation of a  fundamental triple by a PD$^{n}$-complex with $(n-2)$-connected universal cover.

Section \ref{sect:sufficiency} completes the proof of Theorem A, with the sufficiency of the condition in Section \ref{sect:necessity}.

Section \ref{sect:connsum} contains the proof of Theorem B, showing how the fundamental triple detects connected sums.

\section{Background and Notation}\label{sect:notation}

This section summarises background material and fixes notation for the rest of the paper. Details and further references can be found in \cite{BB2010}.

\subsection{}\label{subsection:algebra}
 Let $\Lambda$ be the integral group ring, $\mathbb{Z}[G]$, of the finitely presented group, $G$.

We write $I$ for the \emph{augmentation ideal}, the kernel of the augmentation map 
\begin{equation*}\label{eqn:aug}
   \aug \colon \Lambda \longrightarrow \mathbb{Z}, \quad \sum_{g \in G} n_{g} g \longmapsto \sum_{g \in G} n_{g} 
\end{equation*}
where $\mathbb{Z}$ is a $\Lambda$-bi-module with trivial $\Lambda$ action.

Each cohomology class $\omega \in H^{1}\left(G; \, \raisebox{0.5ex}{$\mathbb{Z}$}/\raisebox{-0.3ex}{$2\mathbb{Z}$}\right)$ may be viewed as a group homomorphism 
\[
\omega \colon G \longrightarrow  \raisebox{0.3ex}{$\mathbb{Z}$}/\raisebox{-0.2ex}{$2\mathbb{Z}$} =\{ 0, 1\}
\]
and yields an anti-isomorphism
\begin{equation*}
   \overline{\phantom{B}} \colon \Lambda \longrightarrow \Lambda, \quad \lambda= \sum_{g \in G} n_{g} g \longmapsto \overline{\lambda} = \sum_{g \in G} (-1)^{\omega(g)}  n_{g} g^{-1}  
   \end{equation*}

Consequently, a right $\Lambda$-module, $A$, yields the left $\Lambda$-module, $^{\omega}\!A$, with action given by
\begin{equation*}\label{eqn:lectactionfromright}
   \lambda  \twist a := a .\overline{\lambda}
\end{equation*}
for $\lambda \in \Lambda, a \in A$. 

Analogously, a left $\Lambda$-module $B$ yields the right $\Lambda$-module, $B^{\omega}$.

Given left $\Lambda$-modules $A_{j}, B_{i}$ for $ 1 \le i \le k$ and $1 \le j \le \ell$, we sometimes write the $\Lambda$-morphism 
\[
\psi \colon \bigoplus_{j=1}^{\ell} A_{j} \longrightarrow \bigoplus_{i=1}^{k}B_{i}
\]
in matrix form as
\[
   \bigoplus_{j=1}^{\ell} A_{j} \xrightarrow[]{ \ \begin{bmatrix}\psi_{ij} \end{bmatrix}_{k \times \ell}\ \ } \bigoplus_{i=1}^{k}B_{i}
\]
for $\psi_{ij} = pr_{i} \circ \psi \circ in_{j} \colon A_{j} \longrightarrow B_{i}$, where $in_{j}$ is the $j^{\text{th}}$ natural inclusion and $pr_{i}$ the $i^{\text{th}}$ natural projection of the direct sum. The composition of such morphisms is given by matrix multiplication.

If $B$ is a left $\Lambda$-module and $M$  a $\Lambda$-bi-module, then $\Hom_{\Lambda}(B,M)$ is a right $\Lambda$-module with  action given by
\begin{equation*}
  \varphi.\lambda \colon B \longrightarrow M, \quad b \longmapsto \varphi(b).\lambda
\end{equation*}

The dual of the left $\Lambda$-module $B$ is the left $\Lambda$-module $B^{\ast} = \,^{\omega}\!\Hom_{\Lambda}(B, \Lambda)$.

This defines an endofunctor on the category of left $\Lambda$-modules.

Evaluation  defines a natural transformation, $^{\omega}\!\varepsilon$, from the identity functor to the double dual functor, where for the left $\Lambda$-module $B$, 
\[
  ^{\omega}\varepsilon_{B} \colon B \longrightarrow  B^{\ast\ast} = \, ^{\omega}\!\Hom_{\Lambda}\big(\,^{\omega}\!\Hom_{\Lambda}(B, \Lambda), \Lambda\big), \quad b \longmapsto \, \overline{\ev_{b}}
\]
with $\overline{\ev_{b}}$  defined by
\[
    \overline{\ev_{b}} \colon \, ^{\omega}\!\Hom(B, \Lambda) \longrightarrow \Lambda, \quad \psi \longmapsto \overline{\psi(b)}
\]

The left $\Lambda$-module, $A$, defines the natural transfomation, 
\(
\eta
\),  from the functor \(A^{\omega} \otimes_{\Lambda} \underline{\phantom{B}} \) to the functor \( \Hom_{\Lambda}\big(\,^{\omega}\!\Hom_{\Lambda}(\,\underline{\phantom{B}} \, , \Lambda), A\big) \) where, for the left $\Lambda$-module $B$, 
\[
   \eta_{B} \colon A^{\omega} \otimes_{\Lambda} B \longrightarrow \Hom_{\Lambda}(B^{\ast}\! , A) = \Hom_{\Lambda}\big(\,^{\omega}\!\Hom_{\Lambda}(B, \Lambda), A\big)
\]is given by
\[
   \eta_{B}(a \otimes b ) \colon \psi \longmapsto \overline{\psi(b)}.a
\]for $a \otimes b \in A^{\omega}\otimes_{\Lambda} B$.

The $\Lambda$-morphisms $f,g \colon A_{1} \longrightarrow A_{2}$ are \emph{homotopic} if and only if the homomorphism
\[
    f-g \colon A_{1} \longrightarrow A_{2}, \quad x \longmapsto f(x) - g(x)
\]
factors through a projective $\Lambda$-module $P$.

Associated with $\Lambda$ is its \emph{stable category}, whose objects are all $\Lambda$-modules  and whose morphisms are all homotopy classes of $\Lambda$-morphisms. Thus,  an isomorphism in the stable category of $\Lambda$ is a homotopy equivalence of $\Lambda$-modules.

\subsection{}\label{subsection:CW|}

We work in the category of connected well pointed $CW$-complexes and pointed maps.  We write $X^{[k]}$ for the $k$-skeleton of $X$, suppressing the base point from our notation.

The inclusion of the $k$-skeleton into the $(k+1)$-skeleton induces homomorphisms
\[
    \iota_{k, r} \colon \pi_{r}(X^{[k]}) \longrightarrow \pi_{r}(X^{[k+1]})
\]
and we write $\Gamma_{k+1}(X)$ for $\im(\iota_{k,k+1})$.

From now, we work with the fundamental group of $X$, and its integral group ring
\begin{equation*}\label{eqn:gpring}
    \Lambda = \mathbb{Z}[\pi_{1}(X)]
\end{equation*}

We take $X$ to be a reduced $CW$-complex, so that $X^{[0]} = \{\ast\}$, and write 
\begin{equation*}\label{eqn:univcover}
     u \colon \widetilde{X}\longrightarrow X
\end{equation*}
for the universal cover of $X$, fixing a base point for  $\widetilde{X}$ in $u^{-1}(\ast)$.

We write  $\mathbf{C}(\widetilde{X})$ for  the cellular chain complex of $\widetilde{X}$ viewed as a complex of left $\Lambda$-modules. 

Since $X$ is reduced, $C_{0}(\widetilde{X}) = \Lambda$.

The homology and cohomology of $X$ we work with are the abelian groups
\begin{align*}
   \widehat{H}_{q}(X;A) &:= H_{q}(A \otimes_{\Lambda} \mathbf{C}(\widetilde{X}))\\
   \widehat{H}^{q}(X;B) &:= H^{q}\big(\Hom_{\Lambda}(\mathbf{C}(\widetilde{X}), B))
\end{align*}
where $A$ is a right $\Lambda$-module and $B$ is a left $\Lambda$-module.

We write $H_{-q}(X;B)$ for $H^{q}(X;B)$, when treating cohomology as ``homology in negative degree'' simplifies the exposition.

\subsection{}\label{subsection:pdncomplex}

An \emph{$n$-dimensional Poincar{\'e} duality complex} (PD$^n$-complex) comprises a reduced connected $CW$-complex, $X$,  whose  fundamental group, $\pi_{1}(X)$,  is finitely presented, together with an \emph{orientation character}, $\omega_{X} \in H^{1}\left(\pi_{1} (X) ; \,  \raisebox{0.3ex}{$\mathbb{Z}$}/\raisebox{-0.2ex}{$2\mathbb{Z}$} \right)$, viewed as a group homomorphism 
\( 
 \pi_{1} (X) \longrightarrow  \raisebox{0.3ex}{$\mathbb{Z}$}/\raisebox{-0.2ex}{$2\mathbb{Z}$}
 \), 
and a \emph{fundamental class }$[X] \in H_n(X; \mathbb Z^{\omega_{X}})$, such that  for every $r \in \mathbb{Z}$ and left $\mathbb{Z} [\pi_{1} (X)]$-module $M$, the \emph{cap product with $[X]$},
\[ 
\underline{\phantom{A}}\smallfrown [X] \colon  H^{r}(X; M) \longrightarrow H_{n-r}(X; M^{\omega_{X}}),\quad \alpha \longmapsto \alpha \smallfrown  [X]\]
is an isomorphism of abelian groups.

We denote this by $(X, \omega_{X}, [X])$.

Wall  (\cite{W1965}, \cite{W1967}) showed   that for $n >3$, every PD$^{n}$ complex is \emph{standard}, meaning that it is homotopically equivalent to an $n$-dimensional $CW$-complex with precisely one $n$-cell, whereas a PD$^{3}$ complex, $X$, is either standard, or \emph{weakly standard}, meaning that it is homotopically equivalent to one of the form $X^{\prime} \cup e^{3}$, where $e^{3}$ is a 3-cell and $X^{\prime}$ is a 3-dimensional $CW$-complex with $H^{n}(X^{\prime};B) = 0$ for all coefficient modules $B$.

\subsection{Homotopy Systems}\label{subsection:homotopysystems}

Baues introduced \emph{homotopy systems} to investigate when chain complexes and chain maps of free $\Lambda$-modules are realised by $CW$-complexes (\cite{HJB-1991}).

\begin{dfn}\label{dfn:homotopysystem}
  Let $n$ be an integer greater than 1. A \emph{homotopy system of order $(n+1)$} comprises 
  \begin{itemize}
  
  \item[(a)] a reduced $n$-dimensional $CW$-complex, $X$,
  
  \item[(b)]  a chain complex $\mathbf{C}$ of free $\Lambda:=\mathbb{Z}[\pi_{1}(X)]$-modules which coincides with $\mathbf{C}(\widetilde{X}) $ in degree $q$ whenever $q \le n$,
  
  \item[(c)] a homomorphism, \( f_{n+1} \colon C_{n+1} \longrightarrow \pi_{n}(X) \) with  \( f_{n+1} \circ d_{n+2} = 0 \),  which renders commutative the diagram
  \[
     \xymatrix{
         C_{n+1} \ar@{->}[r]^{\ f_{n+1}\ } \ar@{->}[d]_{ d_{n+1} \ } & \pi_{n}(X) \ar@{->}[d]^{ \ j} \\
         C_{n} & \pi_{n} (X, X^{[n-1]})\ar@{->}[l]^{h_{n} \qquad } 
     }
  \]
    where $j$ is induced by the inclusion $(X, \ast) \longrightarrow (X, X^{[n-1]})$, and
  \[
     h_{n} \colon \pi_{n}(X, X^{[n-1]}) \xrightarrow[\cong]{ \  \ u_{\ast}^{-1} \ \ } \pi_{n}(\widetilde{X}, \widetilde{X}^{[n-1]}) \xrightarrow[\cong]{ \  \ h \ \ } H_{n}(\widetilde{X}, \widetilde{X}^{[n-1]}) 
  \]
  is  the Hurewicz isomorphism, $h$, composed with $u^{-1}_{\ast}$, the inverse of the isomorphism induced by the universal covering map.

  \end{itemize}
\end{dfn}

\section{Formulation and Necessity of the Realisation Condition}\label{sect:necessity}

\subsection{}

As the $(n-2)$-type, $P_{n-2}X$, of the PD$^{n}$-complex, $X$, with $(n-2)$-connected universal cover, is an Eilenberg-Mac Lane space, $K\big(\pi_{1}(X), 1\big)$,  we may identify the fundamental triple of $X$ with $\big( \pi_{1}(X), \omega, \mu\big)$, where $\omega$ and $\mu$ are, respectively,  the images of $\omega_{X}$ and $[X]$ in the group homology of $\pi_{1}(X)$.

\begin{lem}\label{lem:Hi=0}
    Let $(X, \omega_{X}, [X])$ be a PD$^{n}$-complex with $(n-2)$-connected universal cover. Then, for all $1 \le i < n-1$,
    \[
         H^{i}\big(\pi_{1}(X) ; \,^{\omega}\!\Lambda \big) = 0
    \]
\end{lem}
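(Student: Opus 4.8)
The plan is to push the assertion off the group $\pi_{1}(X)$ and onto the universal cover $\widetilde{X}$ by Poincar\'e duality, where the connectivity hypothesis disposes of it at once; the argument yields the vanishing for $1<i<n-1$, which is exactly the range entering the hypothesis of Theorem A (and is empty when $n=3$). First I would record that, because $\widetilde{X}$ is $(n-2)$-connected, the augmented cellular chain complex $\mathbf{C}(\widetilde{X})\to\mathbb{Z}$ of left $\Lambda$-modules is exact in degrees $\le n-2$; equivalently, the classifying map $X\to K\big(\pi_{1}(X),1\big)$ is an $(n-1)$-equivalence. Hence the truncation of $\mathbf{C}(\widetilde{X})$ begins a free resolution of the trivial $\Lambda$-module $\mathbb{Z}$, so that for every left $\Lambda$-module $M$ there is a natural isomorphism
\[
   H^{i}\big(\pi_{1}(X);M\big)\;\cong\;\widehat{H}^{i}(X;M)\qquad\text{for all } i\le n-2 .
\]

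Next I would invoke Poincar\'e duality for $X$. Taking cap product with $[X]$, with $M={}^{\omega}\!\Lambda$, yields an isomorphism $\widehat{H}^{i}\big(X;{}^{\omega}\!\Lambda\big)\xrightarrow{\;\cong\;}\widehat{H}_{n-i}\big(X;({}^{\omega}\!\Lambda)^{\omega}\big)$. The key point is that the two successive $\omega$-twists cancel: since $\overline{\overline{\lambda}}=\lambda$, the right $\Lambda$-module $({}^{\omega}\!\Lambda)^{\omega}$ is $\Lambda$ itself, with its standard right action. Consequently
\[
   \widehat{H}^{i}\big(X;{}^{\omega}\!\Lambda\big)\;\cong\;\widehat{H}_{n-i}(X;\Lambda)\;=\;H_{n-i}\big(\Lambda\otimes_{\Lambda}\mathbf{C}(\widetilde{X})\big)\;=\;H_{n-i}(\widetilde{X}).
\]

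Combining the two isomorphisms, $H^{i}\big(\pi_{1}(X);{}^{\omega}\!\Lambda\big)\cong H_{n-i}(\widetilde{X})$ for $1<i<n-1$, and this group is trivial: for such $i$ one has $1<n-i<n-1$, so $H_{n-i}(\widetilde{X})=0$ by the $(n-2)$-connectedness of $\widetilde{X}$. The part I expect to need the most care is not conceptual but bookkeeping: one must track the left/right module conventions and the $\omega$-twist faithfully through the duality isomorphism, so that the coefficients genuinely collapse from $({}^{\omega}\!\Lambda)^{\omega}$ to $\Lambda$, and one must confirm that the comparison of the first paragraph is valid exactly in the range $i\le n-2$ --- which is precisely where the $(n-2)$-connectedness of $\widetilde{X}$ is used, and which fixes the admissible values of $i$.
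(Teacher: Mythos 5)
Your proof is correct and follows essentially the same route as the paper's: both identify $H^{i}(\pi_{1}(X);\,{}^{\omega}\!\Lambda)$ with $\widehat{H}^{i}(X;\,{}^{\omega}\!\Lambda)$ in the range controlled by $(n-2)$-connectedness (the paper by building $K(\pi_{1}(X),1)$ from $X$ by attaching cells of dimension $\ge n$, you by observing that $\mathbf{C}(\widetilde{X})$ begins a free resolution of $\mathbb{Z}$), then apply Poincar\'e duality and the connectivity of $\widetilde{X}$. You are also right that the argument only gives vanishing for $1<i<n-1$ and not at $i=1$ --- Poincar\'e duality sends $i=1$ to $H_{n-1}(\widetilde{X})$, which need not vanish --- and indeed the paper's own proof silently restricts to $1<i<n-1$ even though the lemma's statement says $1\le i<n-1$.
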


\begin{proof}
   Construct an Eilenberg-Mac Lane space $K = K\big(\pi_{1}(X), 1\big)$ from $X$ by attaching cells of dimension $n$ and higher.
   
   As the universal cover $\widetilde{X}$ of $X$ is $(n-2)$-connected, the cellular chain complexes of the universal covers $\widetilde{X}$ and $\widetilde{K}$ coincide in degrees below $n$:
   \begin{align*}
       C_{i}(\widetilde{X}) = C_{i}(\widetilde{K}) && \text{for } 0 \le i < n
   \end{align*}
   
   Hence, for $ 1 < i < n-1$,
   \begin{align*}
        H^{i}\big(\pi_{1}(X) ; \,^{\omega}\!\Lambda \big)  & = H^{i}(\widetilde{K} ; \,^{\omega}\!\Lambda) \\ 
                       & = H^{i}(\widetilde{X}; \,^{\omega}\!\Lambda) \\ 
                       & \cong  H_{n-i}(\widetilde{X} ; \Lambda) &&\text{by Poincar{\'{e}} duality}\\ 
                       & = 0
   \end{align*}
\end{proof}

\subsection{}
To formulate the realisation condition, we introduce set of functors, $\{G_{q} \mid q \in \mathbb{Z}\}$, from the category of chain complexes of projective left $\Lambda$-modules and homotopy classes of chain maps to the stable category of $\Lambda$.

Given chain complexes of projective left $\Lambda$-modules, $\mathbf{C}, \mathbf{D}$, and a chain map $\mathbf{f} \colon \mathbf{C} \longrightarrow \mathbf{D}$, we have the commutative diagram
\[
 \xymatrix{
 \cdots \ \ar@{->}[r] & \ C_{q} \ \ar@{->}[d]_{f_{q}} \ar@{->}[r]^{\ d^{\mathbf{C}}_{q} \ \ }  \ & C_{q-1}\  \ar@{->}[d]^{f_{q-1}} \ar@{->}[r] &\cdots \\
 \cdots \ \ar@{->}[r]& \  D_{q} \ \ar@{->}[r]_{\ d^{\mathbf{D}}_{q} \ \ } & \ D_{q-1} \ar@{->}[r] &\cdots
 }
\]
Put
\begin{equation*}
   G_{q}(\mathbf{C}) : = \coker \left(d_{q}^{\mathbf{C}}\right) = \raisebox{0.3ex}{$C_{q-1}$}/\raisebox{-0.2ex}{$\im \left(d_{q}^{\mathbf{C}}\right)$}
\end{equation*}
and let $G_{q}(\mathbf{f})$ be the induced map of cokernels
\begin{equation*}
  \begin{split}
  \xymatrix{
  \im(d_{q}^{\,\mathbf{C}}) \ar@{->}[d] \quad  \ar@{>->}[r]& \ C_{q-1}\ar@{->}[d]_{f_{q-1}} \ \ar@{->>}[r] & \ G_{q}(\mathbf{C}) \ar@{..>}[d]_{\exists !}^{G_{q}(\mathbf{f})}\\ 
   \im(d_{q}^{\,\mathbf{D}}) \quad  \ar@{>->}[r]& \ D_{q-1} \ \ar@{->>}[r] & \ G_{q}(\mathbf{D}) 
  }
  \end{split}
\end{equation*}

Direct verification shows that each $G_{q}$ is a functor from the category of chain complexes of left $\Lambda$-modules to the category of left $\Lambda$-modules.

By Lemma 4.2 in \cite{BB2010}, chain homotopic maps $\mathbf{f} \simeq \mathbf{g} \colon \mathbf{C} \longrightarrow \mathbf{D}$ induce homotopic maps $G_{n}(\mathbf{f}) \simeq G_{n}(\mathbf{g})$, that is, $G_{q}(\mathbf{f}) - G_{q}(\mathbf{g})$ factors through a projective $\Lambda$-module. 

Hence, for each $q \in \mathbb{Z}$, $G_{q}$ is a functor from the category of chain complexes of projective left $\Lambda$-modules to the stable category of $\Lambda$.

\subsection{}

Let $X$ be PD$^{n}$ complex with $n \ge 3$. Put $\Lambda = \mathbb{Z}[\pi_{1}(X)]$ and take $\omega \in H^{1}\!\left(\pi_{1}(X);\, \raisebox{0.3ex}{$\mathbb{Z}$}/\raisebox{-0.2ex}{$2\mathbb{Z}$}\right)$.

 By Remark 2.3 and Lemma 3.6 in \cite{HJB-BB2010}, we may assume that $X = X^{\prime} \cup e^{n}$ is standard (or weakly standard if $n=3$) with
\begin{equation*}
   C_{n}(\widetilde{X}) = C_{n}(\widetilde{X^{\prime}}) \oplus \Lambda[e]
\end{equation*} 
where $[e]$  corresponds to $e^{n}$, $1 \otimes [e] \in \mathbb{Z}^{\omega} \otimes_{\Lambda}C_{n}(\widetilde{X})$ is a cycle representing $[X]$, the fundamental class  of $X$,  and $[e]$ is a generator of $C_{n}(\widetilde{X})$.

Write $F^{q}$ for $G_{q}\big(^{\omega}\!\Hom_{\Lambda}(\phantom{A}, \Lambda)\big).$ 

Poncar{\'{e}} duality, together with Lemma 4.3 in \cite{BB2010}, provides the homotopy equivalence of $\Lambda$-modules
\begin{equation*}
  G_{-n+1}\big(\,\underline{\phantom{A}} \smallfrown (1 \otimes [e])\big) \colon F^{n-1}\big(\mathbf{C}(\widetilde{X}) \big) \longrightarrow G_{1}\big(\mathbf{C}(\widetilde{X})\big)
\end{equation*}

Since $\widehat{H}_{1}\big(C(\widetilde{X})\big) =0$,
 \( 
   G_{1}\big(\mathbf{C}(\widetilde{X})\big) = I.
\) 
Thus, 
 \begin{equation*}
    \theta \colon G_{1}\big(\mathbf{C}(\widetilde{X})\big) \longrightarrow I, \quad [c] \longmapsto d_{1}(c)
\end{equation*}
is an isomorphism of $\Lambda$-modules.

Construct the $(n-2)$-type of $X$, $P = P_{n-2}(X)$ by attaching to $X$  cells of dimension $n$ and higher. 

Then the Postnikov section $p \colon X \longrightarrow P$ is the identity on the $(n-1)$-skeleta and for $0 \le i < n$,
\begin{equation*}
    C_{i}(\widetilde{X}) = C_{i}(\widetilde{P})
\end{equation*}

Composing $G_{-n+1}$ with $\theta$, we obtain the homotopy equivalence of left $\Lambda$-modules
\begin{equation*}
   \theta \circ G_{-n+1}\big(\,\underline{\phantom{A}} \smallfrown (1 \otimes [e])\big) \colon F^{n-1}\big(\mathbf{C}(P)\big) \longrightarrow I
\end{equation*}

\subsection{} 
Given a chain complex of free left $\Lambda$-modules, $\mathbf{C}$, we define a homomorphism
\begin{equation*}
    \nu_{\mathbf{C},r} \colon H_{r+1} \big(\mathbb{Z}^{\omega} \otimes_{\Lambda} \mathbf{C}\big) \longrightarrow \big[F^{r}(\mathbf{C}), I \big]
\end{equation*}
with the property that $\nu_{\mathbf{C}(\widetilde{P}), n-1}\left(u_{\ast}\big([X]\big)\right)$ is the homotopy class of the homotopy equivalence 
\[
\theta \circ G_{-n+1}\big(\,\underline{\phantom{A}} \smallfrown (1 \otimes [e])\big) \colon F^{n-1}\big(\mathbf{C}(\widetilde{X}) \big) \longrightarrow I
\]

The short exact sequence of chain complexes
\begin{equation*}
0 \longrightarrow \overline{I}\mathbf{C} \longrightarrow \mathbf{C} \longrightarrow \mathbb{Z}^{\omega}\otimes_{\Lambda} \mathbf{C} \longrightarrow 0
\end{equation*}  
yields the connecting homomorphism 
\begin{align*}
  \delta \colon H_{r+1}\left(\mathbb{Z}^{\omega} \otimes_{\Lambda} \mathbf{C}\right) &\longrightarrow H_{r} (\overline{I}\mathbf{C}) \\ 
  1 \otimes c \ &\longmapsto \ d_{r+1}(c) 
\end{align*}

Then $\nu_{\mathbf{C}, r}$ is the composition of $\delta$ with
\begin{equation*}
   \widehat{\nu}_{\mathbf{C},r} \colon H_{r}(\overline{I}\mathbf{C}) \longrightarrow [F^{r}(\mathbf{C}), I] 
\end{equation*}
where, for $[\lambda .c] \in H_{r}(\overline{I}\mathbf{C})$, $ \widetilde{\nu}_{\mathbf{C},r}\big([\lambda.c]\big)$ is the class of 
\[
    [\varphi] \longmapsto \overline{\varphi(\lambda.c)} = \overline{\varphi(c)} . \overline{\lambda}
\]  

\begin{lem}
  The map $\tilde{\nu}_{\mathbf{C},r}$ is a well defined homomorphism of groups.
\end{lem}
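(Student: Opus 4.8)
The plan is to unwind the definition of $\widehat\nu_{\mathbf{C},r}$ (written $\widetilde\nu_{\mathbf{C},r}$ in the statement) and check, in turn, that it is independent of the representative cycle $\lambda.c$, that it is independent of the choice of $\varphi$ in its homotopy class, that the target is genuinely an element of the stable hom-group $[F^{r}(\mathbf{C}),I]$, and that the assignment respects addition. Throughout I would exploit that every term in $\overline I\mathbf{C}$ is a $\mathbb{Z}$-linear combination of elements $\lambda.c$ with $\lambda\in\overline I$ and $c$ a chain of $\mathbf{C}$, and that the formula $[\varphi]\longmapsto\overline{\varphi(c)}.\overline\lambda$ is $\mathbb{Z}$-bilinear in $(\lambda,c)$, so that it suffices to verify each point on such elementary tensors and then extend additively.

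First I would pin down the map $F^{r}(\mathbf{C})\longrightarrow I$ attached to a cycle $z=\lambda.c\in\overline I C_{r}$. Recall $F^{r}(\mathbf{C})=G_{r}\big(^{\omega}\!\Hom_\Lambda(\mathbf{C},\Lambda)\big)=\coker\big(d^{*}_{r}\colon\,^{\omega}\!\Hom_\Lambda(C_{r-1},\Lambda)\to\,^{\omega}\!\Hom_\Lambda(C_{r},\Lambda)\big)$, so a class in $F^{r}(\mathbf{C})$ is represented by a $\Lambda$-morphism $\varphi\colon C_{r}\to\Lambda$, well defined modulo those factoring through $d_{r}$. The rule $\varphi\longmapsto\overline{\varphi(c)}.\overline\lambda$ lands in $I$ precisely because $z$ is a cycle in $\overline I\mathbf{C}$: applying $d_{r}$ and using $\aug\circ d_{r}=0$ together with $\overline\lambda\in\overline I$ (the image of $I$ under the anti-isomorphism) forces $\overline{\varphi(c)}.\overline\lambda\in I$. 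Next, if $\varphi=\psi\circ d_{r}$ for some $\psi\colon C_{r-1}\to\Lambda$, then $\varphi(c)=\psi(d_{r}c)$; but $z=\lambda.c$ being a cycle means $\lambda.d_{r}(c)=d_{r}(z)=0$ in $\overline I C_{r-1}$, hence $\overline{\varphi(c)}.\overline\lambda=\overline{\psi(d_rc)}.\overline\lambda=\overline{\psi(\lambda. d_rc)}=0$ up to the sign/twist bookkeeping — so the value is independent of the representative $\varphi$, and we really do get a well-defined element of $[F^{r}(\mathbf{C}),I]$ rather than merely of $\Hom_\Lambda(F^{r}(\mathbf{C}),I)$.

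Then I would check independence of the cycle representing a class in $H_{r}(\overline I\mathbf{C})$: if $z-z'=d_{r+1}(w)$ with $w\in\overline I C_{r+1}$, write $w$ additively as a sum of elementary terms $\mu.c'$; then $d_{r+1}(w)=\sum\mu.d_{r+1}(c')$, and evaluating the formula on such a boundary gives $\sum\overline{\varphi(d_{r+1}c')}.\overline\mu=\sum\overline{(\varphi\circ d_{r+1})(c')}.\overline\mu$. Since $\varphi$ is only defined up to $\mathrm{im}(d^{*}_{r})$ inside $F^{r}(\mathbf{C})$, I would instead argue at the level of the cokernel: the composite $\varphi\circ d_{r+1}$ — equivalently the behaviour of the dual complex one degree up — shows that the induced map on $F^{r}(\mathbf{C})$ sends the class to zero, using that $d^{*}_{r+1}\circ d^{*}_{r+2}=0$ and the cokernel description. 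This is the step I expect to be the main obstacle: one must be careful that $\varphi$ lives in a cokernel, so "evaluating on a boundary and getting $0$" has to be phrased so that the ambiguity in $\varphi$ is absorbed correctly, and the twisted bar-involution signs must be tracked precisely. Finally, additivity in $[\lambda.c]$ is immediate from the $\mathbb{Z}$-bilinearity of $(\lambda,c)\longmapsto\overline{\varphi(c)}.\overline\lambda$ and the fact that homology classes add by adding cycles; combined with the earlier points, this shows $\widetilde\nu_{\mathbf{C},r}$ is a well-defined homomorphism of abelian groups, as claimed.
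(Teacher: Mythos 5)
Your treatment of the first two checks is essentially the paper's. One remark on the first: you invoke the cycle condition and $\aug\circ d_{r}=0$ to see $\overline{\varphi(c)}.\overline{\lambda}\in I$, but neither is needed; $\lambda\in\overline{I}$ already gives $\overline{\lambda}\in I$ (you write $\overline{\lambda}\in\overline{I}$, but applying the involution once more turns $\lambda\in\overline{I}$ into $\overline{\lambda}\in I$), and then $\overline{\varphi(c)}.\overline{\lambda}\in I$ because $I$ is a two-sided ideal. The cycle condition is what you correctly use in the second check, to remove the dependence on the representative $\varphi$ of the class $[\varphi]\in F^{r}(\mathbf{C})$.

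You rightly flag the third check --- independence of the cycle representing the class in $H_{r}(\overline{I}\mathbf{C})$ --- as the main obstacle, but the route you sketch for it does not work. You aim to show that, when $\lambda.c=d_{r+1}(\mu.c')$, the induced map $F^{r}(\mathbf{C})\to I$, $[\varphi]\mapsto\overline{\varphi(d_{r+1}c')}.\overline{\mu}$, is \emph{literally zero}; in general it is not. (It does descend to the cokernel $F^{r}(\mathbf{C})$, but that uses $d^{*}_{r}\circ d^{*}_{r+1}=0$, not the shifted identity $d^{*}_{r+1}\circ d^{*}_{r+2}=0$ you cite, and descending is not vanishing.) What is true --- and what suffices, because the target of $\widehat{\nu}_{\mathbf{C},r}$ is the stable group $[F^{r}(\mathbf{C}),I]$ rather than $\Hom_{\Lambda}(F^{r}(\mathbf{C}),I)$ --- is that this map is \emph{null-homotopic}: it factors as
\[
F^{r}(\mathbf{C})\longrightarrow\Lambda\longrightarrow I,\qquad [\varphi]\longmapsto\overline{\varphi(d_{r+1}c')}\longmapsto\overline{\varphi(d_{r+1}c')}.\overline{\mu},
\]
through the free, hence projective, module $\Lambda$, and a $\Lambda$-morphism factoring through a projective is by definition null-homotopic. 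This factorisation through a projective is the essential idea of the paper's proof; a direct vanishing argument in its place cannot succeed.
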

\begin{proof}
   Let $ \lambda .c \in \overline{I}\mathbf{C}$ be a cycle. 
   
   Since $\aug(\overline{\lambda}) = 0$,
   \begin{equation*} 
       \aug( \overline{ \varphi(c) } . \overline{\lambda}) = \overline{\varphi(c)} . \aug(\overline{\lambda}) = 0
   \end{equation*}
   showing that $\overline{\varphi(c)} . \overline{\lambda} \in I$.
   
   Further, for $\varphi^{\prime} = \varphi + d_{r}^{\ast} {\psi} = \varphi + \psi d_{r}$, we obtain 
   \begin{align*}
    \overline{\varphi^{\prime}(\lambda.c)}& = \overline{\varphi(\lambda.c)} + \overline{\psi\big(d_{r}(\lambda.c)\big)} \\ 
    & =  \overline{\varphi(\lambda.c)} + 0 \\
    & = \overline{\varphi(\lambda.c)}
   \end{align*}
   showing that $\overline{\varphi(c)} . \overline{\lambda}$ depends only on $[\varphi]$, not $\varphi$ itself.
   
   Finally, for $\lambda \in \overline{I}, c\in C_{r+1}$, $\tilde{\nu}_{\mathbf{C},r}\big([d_{r+1}(\lambda.c)]\big)$  is represented by the composition of the $\Lambda$-morphisms
   \[
      ^{\omega}\!\Hom_{\Lambda}(C_{r}, \Lambda) \longrightarrow \Lambda, \quad \varphi \longmapsto \overline{\varphi(d_{r+1}c)}
   \]
   followed by
   \[
     \varrho_{\,\overline{\lambda}} \colon \Lambda \longrightarrow I, \quad \mu \longmapsto \mu.\overline{\lambda}
   \]
As $\Lambda$ is free, this composition is null-homotopic, showing that $\tilde{\nu}_{\mathbf{C},r}([\lambda . c])$ is independent of the representative of the class $[\lambda.c]$.
 
 Thus $\widehat{\nu}$ is well defined.
  \end{proof}

The \emph{Turaev map} is the morphism $\nu_{\mathbf{C}(\widetilde{P}_{n-2}), n-1}$.

\begin{lem}\label{lem:nu}
 $\nu_{\mathbf{C}(\widetilde{P}), n-1}\big( p_{\ast}([X])\big) = \big[\theta \circ G_{-n+1}\big(\,\underline{\phantom{A}} \smallfrown (1\otimes[e]) \big)\big]$
\end{lem}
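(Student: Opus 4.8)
The plan is to unwind the definitions on both sides and trace the fundamental class through the relevant diagram.

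First I would recall the two descriptions of the quantity to be computed. By construction of $\nu_{\mathbf{C},r}$ as $\widehat{\nu}_{\mathbf{C},r} \circ \delta$, evaluating at $p_{\ast}([X]) = u_{\ast}([X])$ requires a cycle representative. From the standard (or weakly standard) structure $X = X' \cup e^{n}$, the class $[X]$ is represented by $1 \otimes [e] \in \mathbb{Z}^{\omega} \otimes_{\Lambda} C_{n}(\widetilde{X})$, and since $C_{i}(\widetilde{X}) = C_{i}(\widetilde{P})$ for $i < n$ while $[e]$ survives into $C_{n}(\widetilde{P})$ as well (the Postnikov section is the identity on $(n-1)$-skeleta), the same chain represents $p_{\ast}([X])$ in $H_{n}(\mathbb{Z}^{\omega} \otimes_{\Lambda} \mathbf{C}(\widetilde{P}))$. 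Applying the connecting homomorphism $\delta$ gives the class $[d_{n}([e])] \in H_{n-1}(\overline{I}\mathbf{C}(\widetilde{P}))$; writing $d_{n}([e]) = \sum \lambda_{j} c_{j}$ in $\overline{I}\mathbf{C}$, the map $\widehat{\nu}_{\mathbf{C}(\widetilde{P}), n-1}$ sends this to the homotopy class of $[\varphi] \mapsto \overline{\varphi(d_{n}[e])}$, which by the evaluation/duality conventions in \S\ref{subsection:algebra} is precisely cap product with $1 \otimes [e]$ followed by the identification $\theta$.

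Next I would make the comparison with the right-hand side precise. The map $\underline{\phantom{A}} \smallfrown (1\otimes[e]) \colon \Hom_{\Lambda}(\mathbf{C}(\widetilde{P}), \Lambda) \to \mathbf{C}(\widetilde{P})$ is, in the degree that matters, $\varphi \mapsto \varphi(\text{slant of } [e])$, and passing to the functor $G_{-n+1} = G_{1}$ of cokernels gives an induced map $F^{n-1}(\mathbf{C}(\widetilde{P})) \to G_{1}(\mathbf{C}(\widetilde{P}))$; composing with the isomorphism $\theta \colon G_{1}(\mathbf{C}(\widetilde{P})) \to I$, $[c] \mapsto d_{1}(c)$, yields exactly the formula $[\varphi] \mapsto \overline{\varphi(d_{n}[e])}$ obtained above, up to the sign and twisting bookkeeping already built into $\overline{\phantom{B}}$. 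Here I would invoke Lemma 4.3 of \cite{BB2010} and the Poincaré duality computation preceding the lemma to know that this composite is a well-defined homotopy equivalence, so that both sides live in $[F^{n-1}(\mathbf{C}(\widetilde{P})), I]$ and it suffices to check they agree as homotopy classes.

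The main obstacle I anticipate is purely bookkeeping: matching the diagram-chase definition of the cap product at chain level (which involves a diagonal approximation on $\mathbf{C}(\widetilde{X})$ and the slant product) with the bare formula $\varphi \mapsto \overline{\varphi(d_{n}[e])}$, and checking that the twisting operator $\overline{\phantom{B}}$ and the left/right module conversions $\mathbb{Z}^{\omega}$, $\,^{\omega}\!\Lambda$ are threaded consistently so that no spurious sign or inversion appears. I would handle this by reducing everything to degree $n$ and degree $n-1$, using that $[e]$ generates the top cell and that $\widehat{H}_{1}(\mathbf{C}(\widetilde{X})) = 0$ forces $G_{1}(\mathbf{C}(\widetilde{X})) = I$ via $\theta$, so the diagonal approximation only contributes through its component landing in $C_{0}(\widetilde{X}) \otimes C_{n}(\widetilde{X}) = \Lambda \otimes C_{n}(\widetilde{X})$, collapsing the cap product to evaluation. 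Once that reduction is in place, the identity of the two homotopy classes is immediate from the explicit formula for $\widehat{\nu}$.
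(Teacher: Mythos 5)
Your plan has the right skeleton and matches the paper's at a high level: unwind $\nu = \widehat{\nu}\circ\delta$, feed in the cycle $1\otimes[e]$, get $[\varphi]\mapsto\overline{\varphi(d_{n}e)}$ on one side, and compare this with $\theta\circ G_{-n+1}(\underline{\phantom{A}}\smallfrown(1\otimes[e]))$ on the other by shifting the cap product up to cochain degree $n$. But there is a genuine gap at the heart of the comparison, which you treat as "purely bookkeeping."

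Your claim that the diagonal approximation "only contributes through its component landing in $C_{0}\otimes C_{n}$, collapsing the cap product to evaluation" is not correct as stated, and the discrepancy is exactly where the work lies. Writing $\Delta e = e\otimes\lambda + (\text{lower terms})$, the relevant component is $e\otimes\lambda$ with $\lambda\in\Lambda=C_{0}(\widetilde X)$, and there is no reason for $\lambda$ to equal $1$. So after the chain-map reduction the cap product gives $[\varphi]\mapsto\overline{\varphi(d_{n}e)}\,\lambda$, not $\overline{\varphi(d_{n}e)}$. To close the gap one must (a) use the chain homotopy $\alpha$ with $id-(id\otimes\aug)\Delta = d\alpha+\alpha d$ together with the fact that $1\otimes e$ generates $\widehat H_{n}(X;\mathbb Z^{\omega})\cong\mathbb Z$ to deduce $\aug(\lambda)=1$; (b) conclude $\lambda - 1\in I=\im(d_{2})$, so $\lambda = 1 + d_{2}(c_{2})$; and (c) observe that the resulting correction term $[\varphi]\mapsto\overline{\varphi(d_{n}e)}\cdot d_{2}(c_{2})$ factors through the free module $\Lambda$ and is therefore null-homotopic, so the two maps agree only as homotopy classes, not "precisely." Your proposal also does not spell out the step that makes the degree shift legitimate — namely that $\mathbf G(\underline{\phantom{A}}\smallfrown(1\otimes[e]))$ is a chain map, so $\theta\circ G_{-n+1}(\ldots)([\varphi]) = G_{-n}(\ldots)([d_{n}^{\ast}\varphi])$, reducing to a degree-$n$ cochain capped against the top cell. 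Without (a)--(c) the identity you want is simply false at the level of representatives, and the "explicit formula for $\widehat\nu$" does not rescue it.
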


\begin{proof}
  Take a diagonal $\Delta \colon \mathbf{C}(\widetilde{X}) \longrightarrow  \mathbf{C}(\widetilde{X}) \otimes_{\mathbb{Z}}  \mathbf{C}(\widetilde{X})$ and a chain homotopy $\alpha \colon  \mathbf{C}(\widetilde{X}) \longrightarrow  \mathbf{C}(\widetilde{X}) $ such that 
  \begin{equation*}
     id - (id \otimes \aug)\Delta = d \alpha + \alpha d
  \end{equation*}
  where we identify $\mathbf{C} \otimes_{\mathbb{Z}} \mathbb{Z}$ with $ \mathbf{C}$.
  
  Suppose that
  \begin{equation*}
      \Delta e = e \otimes \lambda + \sum_{\ell}\sum_{0 \le   i   < n}x_{\ell,i} \otimes y_{\ell, n-i}
  \end{equation*}
  Then 
  \begin{align*}
     e & = (id \otimes \aug)\Big(  e \otimes \lambda + 
     \sum_{\ell}\sum_{0 \le i < n} 
     x_{\ell,i} \otimes y_{\ell, n-i}\Big) + d\alpha e + \alpha de \\ 
      &= (id \otimes \aug)\Big(  e \otimes \lambda + 
     \sum_{\ell}\sum_{0 \le i < n} 
     x_{\ell,i} \otimes y_{\ell, n-i}\Big) + 0 + \alpha de &&\text{as }C_{n+1}(\widetilde{X})=0 \\ 
      & = e \otimes \aug(\lambda) + \alpha de \\
      & = \aug(\lambda) e + \alpha de
  \end{align*}
  where we have identified $C_{n}(\widetilde{X})\otimes C_{0}(\widetilde{X}) = C_{n}(\widetilde{X}) \otimes \Lambda$ with $C_{n}(\widetilde{X})$, using $c \otimes \lambda \longmapsto \lambda . e$.
  
Since  $[1 \otimes e]$ generates $\widehat{H}_{n}\left(X; \mathbb{Z}^{\omega}\right) \cong \mathbb{Z}$,
  \begin{align*}
      1 \otimes e & = 1 \otimes \aug(\lambda .e) + 1 \otimes \alpha de \\ 
                           & = \aug(\lambda) \otimes e + (1 \otimes \alpha) (1 \otimes d)( 1 \otimes e) \\ 
                           & = \aug(\lambda) \otimes e +(1 \otimes \alpha)(0) 
\end{align*}
whence $\aug(\lambda) =1$, or $\aug( \lambda - 1) =0$.

Hence, $\lambda -1 \in I = \im(d_{2})$, or, $ \lambda = 1 + d_{2}(c_{2})$ for some $c_{2} \in C_{2}(\widetilde{X})$. 
  
  Thus, given $\varphi \in \Hom_{\Lambda}\big(C_{n}(\widetilde{X}), \Lambda\big)$, 
  \begin{align*}
      \varphi \smallfrown (1 \otimes e) & = \varphi(e) \otimes \lambda\\
             & = \varphi(e) \otimes \big(1 + d_{2}(c_{2})\big) \\ 
             & = \overline{\varphi(e)}\big(1 + d_{2}(c_{2})\big)
  \end{align*}
  Since  $\mathbf{G}\big(\,\underline{\phantom{A}} \smallfrown (1 \otimes [e])\big)$ is a chain map,
    \begin{align*}
      \Big(\theta \circ G_{-n+1}\big(\,\underline{\phantom{A}} \smallfrown (1 \otimes [e])\big)\Big)\big([\varphi]\big) & = d_{1}\Big(G_{-n+1}\big(\,\underline{\phantom{A}} \smallfrown (1 \otimes [e])\big)\big([\varphi]\big) \Big) \\
            & = G_{-n}\big(\,\underline{\phantom{A}} \smallfrown  (1 \otimes [e])\big)\Big(\big[d_{n}^{\ast}(\varphi)\big]\Big) 
            \\ 
            & = d_{n}^{\ast}(\varphi) \smallfrown (1 \otimes [e]) \\ 
            & = \overline{d_{n}^{\ast}(\varphi)(e)}(1 + dc_{2}d_{2}) \\ 
            & = \overline{\varphi\big(d_{n}(e)\big)}(1 + c_{2}d_{2})
  \end{align*}
  On the other hand,
  \begin{align*}
     \nu_{\mathbf{C}(\widetilde{P}), n-1}\big(p_{\ast}([X])\big)\big([\varphi]\big) & = \nu_{\mathbf{C}(\widetilde{P}), n-1}\big( p_{\ast}(1 \otimes [e]) \big)\big([\varphi]\big) \\ 
         & = \widehat{\nu}_{\mathbf{C}(\widetilde{P}), n-1}\big(\delta (1 \otimes [e]) \big)\big([\varphi]\big) \\ 
         & = \widehat{\nu}_{\mathbf{C}(\widetilde{P}), n-1}\big( [d_{n}(e)] \big)\big([\varphi]\big)
  \end{align*}
  Hence, by definition, $\widehat{\nu}_{\mathbf{C}(\widetilde{P}), n-1}\big(p_{\ast}([X])\big)$ is represented by the $\Lambda$-morphism
  \begin{equation*}
  F^{n-1}\big(\mathbf{C}(\widetilde{P})\big) \longrightarrow I, \quad [\varphi] \longmapsto \overline{\varphi\big(d_{n}(e)\big)}
  \end{equation*}
  To conclude the proof, note that
  \begin{equation*}
     F^{n-1}\mathbf{C}(\widetilde{P}) \longrightarrow I, \quad [\varphi] \longmapsto \overline{\varphi\big(d_{n}(e)\big)}.d_{2}(c_{2})
  \end{equation*}
  is null-homotopic.
\end{proof}

\subsection{}
As $\theta \circ G_{-n+1} \big(\,\underline{\phantom{A}} \smallfrown (1  \otimes [e])\big)$ is a homotopy equivalence of $\Lambda$-modules, Lemma \ref{lem:nu} provides a necessary condition for realisation.

\begin{thm}\label{thm:necforrealisation}
 If a fundamental triple $(P_{n-2}, \omega, \mu)$, consisting of an $(n-2)$-type, $P_{n-2}$, $\omega \in H^{1}\big(P_{n-2}; \raisebox{0.3ex}{$\mathbb{Z}$}/\raisebox{-0.2ex}{$2\mathbb{Z}$}\big)$ and $\mu \in H_{n}(P_{n-2}; \mathbb{Z}^{\omega})$, is the fundamental triple of a PD$^{n}$-complex $X$, then $\nu_{\mathbf{C}(\widetilde{P}_{n-2}), n-1}(\mu)$,   the Turaev map applied to $\mu$, is a homotopy equivalence of modules over $\Lambda = \mathbb{Z}[\pi_{1}(P_{n-2})]$.
\end{thm}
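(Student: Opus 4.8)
The plan is to deduce the statement almost directly from Lemma~\ref{lem:nu} together with the homotopy equivalence assembled in the preceding subsections; the substantive work — extracting a homotopy equivalence of $\Lambda$-modules from Poincar\'e duality — has already been carried out, so what remains is a matter of unwinding definitions and invoking naturality.

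First I would fix the model. By hypothesis $(P_{n-2},\omega,\mu)$ is the fundamental triple of a PD$^{n}$-complex $X$, so, up to homotopy equivalence, $P_{n-2}$ is the $(n-2)$-type $P=P_{n-2}(X)$, the class $\omega$ corresponds to $\omega_{X}$, and $\mu$ corresponds to $p_{\ast}([X])$ under the Postnikov section $p\colon X\to P$. Since $F^{n-1}$ is a functor from chain complexes of projective $\Lambda$-modules to the stable category, a homotopy equivalence $P_{n-2}\simeq P$ induces a chain homotopy equivalence $\mathbf{C}(\widetilde{P}_{n-2})\simeq\mathbf{C}(\widetilde{P})$, hence a homotopy equivalence $F^{n-1}\big(\mathbf{C}(\widetilde{P}_{n-2})\big)\simeq F^{n-1}\big(\mathbf{C}(\widetilde{P})\big)$; and the Turaev maps $\nu_{\mathbf{C}(\widetilde{P}_{n-2}),n-1}$ and $\nu_{\mathbf{C}(\widetilde{P}),n-1}$ are intertwined by these equivalences, because each is the composite of the connecting homomorphism $\delta$ of $0\to\overline{I}\mathbf{C}\to\mathbf{C}\to\mathbb{Z}^{\omega}\otimes_{\Lambda}\mathbf{C}\to0$ with $\widehat{\nu}_{\mathbf{C},n-1}$, both natural in $\mathbf{C}$. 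It therefore suffices to prove that $\nu_{\mathbf{C}(\widetilde{P}),n-1}\big(p_{\ast}([X])\big)$ is a homotopy equivalence of $\Lambda$-modules, and we may, by Remark~2.3 and Lemma~3.6 in \cite{HJB-BB2010}, take $X=X'\cup e^{n}$ to be standard (weakly standard if $n=3$), with $1\otimes[e]$ representing $[X]$.

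Next I would invoke the two facts recalled in the preceding subsections. On the one hand, Poincar\'e duality together with Lemma~4.3 in \cite{BB2010} furnishes the homotopy equivalence $G_{-n+1}\big(\,\underline{\phantom{A}}\smallfrown(1\otimes[e])\big)\colon F^{n-1}\big(\mathbf{C}(\widetilde{X})\big)\to G_{1}\big(\mathbf{C}(\widetilde{X})\big)$, and since $\widehat{H}_{1}\big(\mathbf{C}(\widetilde{X})\big)=0$ the map $\theta\colon G_{1}\big(\mathbf{C}(\widetilde{X})\big)\to I$ is an isomorphism of $\Lambda$-modules; as $C_{i}(\widetilde{X})=C_{i}(\widetilde{P})$ in the degrees relevant to $F^{n-1}$, composition gives the homotopy equivalence of $\Lambda$-modules $\theta\circ G_{-n+1}\big(\,\underline{\phantom{A}}\smallfrown(1\otimes[e])\big)\colon F^{n-1}\big(\mathbf{C}(\widetilde{P})\big)\to I$. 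On the other hand, Lemma~\ref{lem:nu} identifies $\nu_{\mathbf{C}(\widetilde{P}),n-1}\big(p_{\ast}([X])\big)$ with the homotopy class $\big[\theta\circ G_{-n+1}\big(\,\underline{\phantom{A}}\smallfrown(1\otimes[e])\big)\big]$. Combining the two, $\nu_{\mathbf{C}(\widetilde{P}),n-1}\big(p_{\ast}([X])\big)$ is the homotopy class of a homotopy equivalence, hence an isomorphism in the stable category of $\Lambda$; by the reduction of the second paragraph, so is $\nu_{\mathbf{C}(\widetilde{P}_{n-2}),n-1}(\mu)$, which is exactly the assertion.

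The only point that genuinely requires attention — as opposed to the manufacture of the homotopy equivalence, which is already in hand — is the reduction in the second step: that ``the Turaev map applied to $\mu$'' is insensitive to the choice of $CW$-model for $P_{n-2}$ and agrees with the map produced by the concrete construction from $X$. This amounts to the naturality of $\nu_{\mathbf{C},n-1}$ under chain homotopy equivalences of complexes of free $\Lambda$-modules, which follows from the naturality of $\delta$ and of $\widehat{\nu}_{\mathbf{C},n-1}$ and the functoriality of $F^{n-1}$ into the stable category; once this is recorded, the theorem is immediate.
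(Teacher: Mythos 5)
Your argument is correct and is essentially the paper's proof: the paper likewise chooses a model $P'_{n-2}$ built from $X$, applies Lemma~\ref{lem:nu} to identify $\nu_{\mathbf{C}(\widetilde{P}'_{n-2}),n-1}(i_{\ast}[X])$ with the Poincar\'e-duality homotopy equivalence $\theta\circ G_{-n+1}\big(\,\underline{\phantom{A}}\smallfrown(1\otimes[e])\big)$, and then transports this along a homotopy equivalence $f\colon P_{n-2}\to P'_{n-2}$ with $f_{\ast}(\mu)=i_{\ast}([X])$ via a commutative naturality square. Your closing paragraph on the naturality of $\nu_{\mathbf{C},n-1}$ is exactly the content the paper records as that diagram, so nothing is missing.
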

\begin{proof}
   Let $P_{n-2}$ be an $(n-2)$-type. Take $\omega \in H^{1}\big(P_{n-2} ;\raisebox{0.3ex}{$\mathbb{Z}$}/\raisebox{-0.2ex}{$2\mathbb{Z}$}\big)$ and $\mu \in H_{n}(P_{n-2}; \mathbb{Z}^{\omega})$. Suppose that $(P_{n-2}, \omega, \mu)$ is the fundamental triple of the PD$^{n}$-complex, $X$. 
   
   If $P^{\prime}_{n-2}$ is an $(n-2)$-type obtained by attaching to $X$ cells of dimension $n$ and higher, then there is  a homotopy equivalence $f \colon P_{n-2} \longrightarrow P^{\prime}_{n-2}$ with $f_{\ast}(\mu) = i_{\ast}([X])$, where $i \colon X \longrightarrow P^{\prime}_{n-2}$ is the inclusion.
   
   By Lemma \ref{lem:nu},
 \begin{equation*}
    \nu_{\mathbf{C}(\widetilde{P^{\prime}}_{n-2}) n-1}\big(i_{\ast}[X]\big) =\big[\theta \circ G_{-n+1}\big(\,\underline{\phantom{A}} \smallfrown (1  \otimes [e])\big)\big]
 \end{equation*}
and is hence a homotopy equivalence of $\Lambda$-modules. 

The diagram
\begin{equation*}
  \begin{CD}
    F^{n-1}\big(\mathbf{C}(\widetilde{P}_{n-2})\big) @>{\quad \qquad \qquad \nu_{ \mathbf{C}(\widetilde{P}_{n-2}), n-1}(\mu) \qquad \qquad \quad  }>> I \\
@V{F^{n-1}(f_{\ast})}VV    @| \\ 
 F^{n-1}\big(\mathbf{C}(\widetilde{P}^{\prime}_{n-2})\big) @>>{\ \nu_{ \mathbf{C}(\widetilde{P^{\prime}}_{n-2})n-1}\big(f_{\ast}(\mu)\big) =\nu_{ \mathbf{C}(\widetilde{P}^{\prime}_{n-2}), n-1}(i_{\ast}[X])  \ }> I \\
  \end{CD}
\end{equation*}
commutes, so that $\nu_{ \mathbf{C}(\widetilde{P}_{n-2}), n-1}(\mu)$ is a homotopy equivalence of modules.
\end{proof}

Necessary conditions for realisation are a corollary to Lemma \ref{lem:Hi=0} and Theorem \ref{thm:necforrealisation}

\begin{cor}[\textbf{Conditions for Realisability}]\label{cor:ftem}
Let  $(G, \omega, \mu)$ be a triple with $G$ a group, $\omega \in H^{1}\big(G; \raisebox{0.3ex}{$\mathbb{Z}$}/\raisebox{-0.2ex}{$2 \mathbb{Z}$}\big)$ and $\mu \in H_{n}(G; \mathbb{Z}^{\omega})$.
 
If  $(G, \omega, \mu)$ is the fundamental triple of a PD$^{n}$-complex, $X$, with $(n-2)$-connected universal cover, then $\nu_{\mathbf{C}(\widetilde{K}), n-1}(\mu)$ is a homotopy equivalence of $\Lambda = \mathbb{Z}[\pi_{1}(X)]$ modules, and for $1 < i < n-1 $
 \[
   H^{i}(G; \,^{\omega}\!\Lambda ) = 0 
 \]
\end{cor}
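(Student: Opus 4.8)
The plan is to recognise the abstract data $(G,\omega,\mu)$ as the fundamental triple of $X$ read off in a convenient model, and then to invoke Theorem \ref{thm:necforrealisation} and Lemma \ref{lem:Hi=0} with essentially no further work.

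First I would exploit the hypothesis that $\widetilde{X}$ is $(n-2)$-connected: as recorded at the start of Section \ref{sect:necessity}, the $(n-2)$-type $P = P_{n-2}(X)$ is then aspherical, so $P$ is an Eilenberg--Mac Lane complex $K(G,1)$ with $G = \pi_{1}(X)$, and one may take $P$ to be built from $X$ by attaching cells of dimension $n$ and above. Under this identification the images of $\omega_{X} \in H^{1}\big(\pi_{1}(X); \raisebox{0.3ex}{$\mathbb{Z}$}/\raisebox{-0.2ex}{$2\mathbb{Z}$}\big)$ and $[X] \in H_{n}(X; \mathbb{Z}^{\omega_{X}})$ in the group (co)homology of $G$ are, by definition, $\omega$ and $\mu$. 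Since $\mathbf{C}(\widetilde{P})$ is a free resolution of the trivial $\Lambda$-module $\mathbb{Z}$, any two such resolutions are chain homotopy equivalent, and each $G_{q}$ --- hence $\nu_{\underline{\phantom{A}},\,n-1}$ --- is homotopy invariant by the discussion following their construction in Section \ref{sect:necessity}, the homotopy class of $\nu_{\mathbf{C}(\widetilde{K}),n-1}(\mu)$ is independent of the chosen $K(G,1)$-model; in particular it agrees with $\nu_{\mathbf{C}(\widetilde{P}),n-1}(\mu)$.

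With this in hand, the first assertion is exactly Theorem \ref{thm:necforrealisation} applied to the $(n-2)$-type $P$ and the triple $(P,\omega,\mu) = \big(P_{n-2}(X),\omega,\mu\big)$, which by construction is the fundamental triple of $X$: it yields that $\nu_{\mathbf{C}(\widetilde{P}),n-1}(\mu)$, and therefore $\nu_{\mathbf{C}(\widetilde{K}),n-1}(\mu)$, is a homotopy equivalence of $\Lambda$-modules. The vanishing statement is immediate from Lemma \ref{lem:Hi=0} applied to the PD$^{n}$-complex $(X,\omega_{X},[X])$, which gives $H^{i}\big(\pi_{1}(X); \,^{\omega}\!\Lambda\big) = 0$ for all $1 \le i < n-1$, hence in particular for $1 < i < n-1$, and $\pi_{1}(X) = G$.

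I do not expect a serious obstacle, since the statement is designed as a corollary; the only point deserving a sentence of care is the claim that the Turaev map $\nu_{\mathbf{C}(\widetilde{K}),n-1}$ is well defined on $G$ (equivalently, on the chain homotopy type of a free resolution of $\mathbb{Z}$ over $\Lambda$) rather than on a specific $CW$-model of $K(G,1)$ --- and this is precisely the homotopy invariance of the functors $G_{q}$ recorded via Lemma 4.2 of \cite{BB2010}.
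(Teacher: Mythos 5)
Your proposal is correct and follows essentially the same route as the paper, which derives the corollary immediately from Lemma~\ref{lem:Hi=0} and Theorem~\ref{thm:necforrealisation} by identifying the $(n-2)$-type of $X$ with an Eilenberg--Mac Lane space $K(G,1)$. The one thing you add that the paper leaves implicit is the observation that the homotopy class $\nu_{\mathbf{C}(\widetilde{K}),n-1}(\mu)$ does not depend on the choice of $K(G,1)$-model, since $\mathbf{C}(\widetilde{K})$ and $\mathbf{C}(\widetilde{P}_{n-2}(X))$ are chain homotopy equivalent free resolutions of $\mathbb{Z}$ and $G_q$ (hence $F^{n-1}$ and $\nu$) is invariant under chain homotopy by Lemma~4.2 of \cite{BB2010} together with the naturality displayed in the commutative square in the proof of Theorem~\ref{thm:necforrealisation}; this is a reasonable and worthwhile point to make explicit.
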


\section{Sufficiency of the Realisation Condition}\label{sect:sufficiency}

Let $G$ be a finitely presentable group of type FP$_{n}$, with $H^{i}(G; \,^{\omega}\! \mathbb{Z}[G]) =0$ for $1 < i < n-1$, and  $n \ge 3$.


Let $K^{\prime} = K(G,1)$ be an Eilenberg-Mac Lane space and identify the (co-)homologies of $G$ and $K^{\prime}$.

Given $\omega \in H^{1}\big(G; \raisebox{0.3ex}{$\mathbb{Z}$}/\raisebox{-0.2ex}{$2\mathbb{Z}$} \big)$ and $\mu \in H_{n}\big( G; \mathbb{Z}^{\omega}\big) $, with $\nu_{\mathbf{C}(\widetilde{K^{\prime}}), n-1}(\mu) $ a class of homotopy equivalences of $\mathbb{Z}[G]$-modules, we construct a PD$^{n}$-complex, $X$, with $(n-2)$-connected universal cover and fundamental triple $(G, \omega, \mu)$.

Let $\widetilde{K^{\prime}} \longrightarrow K^{\prime}$ be the universal covering space of $K^{\prime}$.

By our assumptions about $G$, we can choose ${K^{\prime}}$ with finitely many cells in each dimension. 

Let $h \colon F^{n-1}\big( \mathbf{C}(\widetilde{K^{\prime}})\big) \longrightarrow I$ be a representative of  $\nu_{\mathbf{C}(\widetilde{K^{\prime}}), n-1}(\mu)$. Then $h$ is a homotopy equivalence of $\Lambda$-modules. By Theorem 4.1 and Observation 1 in \cite{BB2010}, $h$ factors as
\begin{equation*}
 \xymatrix{
    F^{n-1}\big(\mathbf{C}(\widetilde{K^{\prime}})\big) \  \ar@{>->}[r] &  \  F^{n-1}\big(\mathbf{C}(\widetilde{K^{\prime}})\big) \oplus \Lambda^{m} \, \  \ar@{>->}[r]^{\qquad \quad  j \ } &  \  I \oplus P  \   \ar@{->>}[r] &  \   I
 }
 \end{equation*}
for some $m \in \mathbb{N}$ and $P$ a projective $\Lambda$-module.

Let $B = (e^{0} \cup e^{n-1}) \cup e^{n}$ be the $n$-dimensional ball and replace $K^{\prime}$ by the Eilenberg-Mac Lane space $K = K^{\prime} \cup \Big({\displaystyle \bigcup_{i=1}^{m} B} \Big)$. 

Then $F^{n-1}\big(
                                  \mathbf{C}(\widetilde{K})
                           \big) = F^{n-1}\big(\mathbf{C}(\widetilde{K^{\prime}})\big) \oplus \Lambda^{m}$ and the factorisation of $h$ becomes
\begin{equation*}
  \xymatrix{
       h \colon F^{n-1}\big(\mathbf{C}(\widetilde{K})\big)\quad \ar@{>->>}[r]^{\qquad \qquad \qquad j \qquad\qquad} & \ I \oplus P \ \ar@{->>}[r]^{\qquad \qquad \qquad pr_{I} \qquad\qquad \quad} &\ I
  }
\end{equation*}

Consider the $\Lambda$-morphism, $\varphi$, given by the composition$$  \xymatrix{
    C^{n-1}(\widetilde{K}) = \,  ^{\omega}\!\Hom_{\Lambda}\big(C_{n-1}(\widetilde{K}), \Lambda \big)  \ar@{->>}[r]^{ \qquad \qquad \pi \ } & F^{n-1}\big(\mathbf{C}(\widetilde{K})\big) \ \ar@{>->}[r]^{\qquad j \quad} &    I \oplus P \ \ar@{->}[r]^{\phantom{a}\qquad\qquad \qquad{\ \text{\scriptsize $\begin{bmatrix} i & 0 \\ 0 & id \end{bmatrix} $} \ }\qquad \qquad\phantom{a}\qquad} & \ \Lambda \oplus P
  }
$$

where $\pi$ is the natural projection onto the cokernel, and $ i \colon I \rightarrowtail \Lambda$ is the inclusion.

Since $F^{n-1}\big(\mathbf{C}(\widetilde{K}) \big) = \raisebox{0.3ex}{$C_{n-1}(\widetilde{K})$}/\raisebox{-0.2ex}{$\im(d^{\ast}_{n-1})$}$ by definition, 
$\varphi \circ d^{\ast}_{n-1}=0$. 

The dual of $\varphi$ is a $\Lambda$-morphism
\begin{equation*}
    \varphi^{\ast} \colon (\Lambda \oplus P )^{\ast} \longrightarrow C_{n-1}(\widetilde{K})^{\ast\ast} 
    \end{equation*}
    As $C_{n-1}(\widetilde{K}^{[n-1]})$ is a finitely generated free $\Lambda$-module, the 
 natural   map 
 \[
    ^{\omega}\varepsilon \colon C_{n-1}(\widetilde{K}^{[n-1]}) \longrightarrow C_{n-1}(\widetilde{K}^{[n-1]})^{\ast\ast}
 \] is an isomorphism and we define
    \begin{equation*}
       d_{n} \colon (\Lambda \oplus P)^{\ast} \longrightarrow C_{n-1}(\widetilde{K})
    \end{equation*}
    by
    \[
       d_{n} = \, \big(^{\omega}\varepsilon\big)^{-1} \circ \,\varphi^{\ast}
    \]
    By the naturality of \,$^{\omega}\varepsilon$,
    \begin{align*}
         d_{n-1} \circ d_{n} & = (^{\omega}\varepsilon)^{-1} \circ d_{n-1}^{\ast\ast} \circ ^{\omega}\varepsilon \circ (^{\omega}\varepsilon)^{-1} \circ \varphi^{\ast} \\
                                           & = (^{\omega}\varepsilon)^{-1} \circ (\varphi \circ d_{n-1}^{\ast})^{\ast} \\ 
                                           &=0
    \end{align*}

 We first consider the case when $P$ is free, so that $P \cong \Lambda^{q}$ for some $q \in \mathbb{N}$ and $\Lambda \oplus P \cong \Lambda^{q+1}$.
 
 \medskip

Since  ${\widetilde{K}}^{[n-1]}$ is $(n-2)$-connected, the Hurewicz homomomorphism
\[
    h_{q} \colon \pi_{q}\big(\widetilde{K}^{[n-1]}\big) \longrightarrow H_{q}\big(\widetilde{K}^{[n-1]}\big)
\] is an isomorphism for $ q \le n-1$ and we obtain the map
\begin{align*}
      \varphi^{\prime}\colon \Lambda^{q+1} \cong (\Lambda \oplus P)^{\ast}  &\longrightarrow \ker(d_{n-1}) = H_{n-1}(\widetilde{K}^{[n-1]}) \xrightarrow[]{\ h_{n-1}^{-1} \ } \pi_{n-1}(\widetilde{K}^{[n-1]})\\ 
      x \quad & \longmapsto \quad h_{n-1}^{-1} \big( [d_{n}(x)] \big)
\end{align*}

Let $\mathbf{C} $ be the chain complex of $\Lambda$-modules
\begin{equation*}
 \Lambda^{q+1} \cong \big(\Lambda \oplus P\big)^{\ast} \xrightarrow{\ d_{n} \ } C_{n-1}(\widetilde{K}^{[n-1]}) \xrightarrow{ \ d_{n-1} \ } \ \cdots \ \xrightarrow{ \ \quad \ } C_{1}(\widetilde{K}^{[n-1]}) \xrightarrow{ \ \quad \ } \Lambda
\end{equation*}

Then  $Y = ( \mathbf{C}, \overline{\varphi}, K^{[n-1]})$ is a homotopy system of order $n$.

As $C_{i} = 0$ for $i > n$, \ \( \widehat{H}^{n+2}(Y ; \Gamma_{n}Y) = 0$ and, by Proposition 8.3 in \cite{HJB-BB2010}, there is a homotopy system $(\mathbf{C}, 0, X)$ of order $n+1$ realising $Y$, with $X$ an $n$-dimensional $CW$-complex.

By construction, $\mathbf{C}(\widetilde{X}) = \mathbf{C}$ and the universal cover of $X$ is $(n-2)$-connected.

The inclusion $ i \colon K^{[n-1]} \longrightarrow K$ extends to a map 
\begin{equation*}
    f  \colon X \longrightarrow K = K(G,1)
\end{equation*}
and we may consider $\omega \in H^{1}\!\left( K; \raisebox{0.3ex}{$\mathbb{Z}$}/\raisebox{-0.2ex}{$2\mathbb{Z}$}\right)$ as element of $ H^{1}\!\left( X; \raisebox{0.3ex}{$\mathbb{Z}$}/\raisebox{-0.2ex}{$2\mathbb{Z}$}\right)$.

\begin{prop}\label{prop:realisefree}
 $X$ is a PD$^{n}$-complex with fundamental triple $(G, \omega, \mu)$, that is
 \begin{itemize}
   \item[(i)] $ \mathbb{Z} \cong H_{n}(X; \mathbb{Z}^{\omega}) = \langle [X] \rangle $;
   
   \item[(ii)] $f_{\ast}([X]) = \mu$;
   
   \item[(iii)] $\underline{\phantom{A}} \smallfrown [X] \colon H^{r}(X; \, ^{\omega}\!\Lambda) \longrightarrow H_{n-r}(X ; \Lambda)$ is an isomorphism for every $r \in \mathbb{Z}$.
 \end{itemize}
\end{prop}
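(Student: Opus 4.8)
The plan is to verify the three assertions by a bootstrapping argument that propagates Poincaré duality from $K = K(G,1)$ down to the finite complex $X$ we have just built. The chain complex $\mathbf{C} = \mathbf{C}(\widetilde X)$ agrees with $\mathbf{C}(\widetilde{K}^{[n-1]})$ below degree $n$, and in degree $n$ it is $(\Lambda\oplus P)^{\ast}\cong\Lambda^{q+1}$ with differential $d_n = (^{\omega}\varepsilon)^{-1}\circ\varphi^{\ast}$. First I would compute $H_n(X;\mathbb Z^{\omega}) = H_n(\mathbb Z^{\omega}\otimes_\Lambda\mathbf C)$ and $\widehat H^{r}(X;{}^{\omega}\!\Lambda)$ directly from this description. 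The key observation is that, by construction, $\varphi$ was assembled from the factorisation $h\colon F^{n-1}(\mathbf C(\widetilde K))\rightarrowtail I\oplus P\twoheadrightarrow I$ of a homotopy equivalence, together with the inclusion $I\rightarrowtail\Lambda$; so the dual map $d_n=(^{\omega}\varepsilon)^{-1}\varphi^{\ast}$ "sees" $h$ as well. Dualising the chain complex $\mathbf C$ and forming the functors $F^{q}$, I would show that $F^{n-1}(\mathbf C(\widetilde X))\to I$ induced by $d_n^{\ast}$ is, up to the projective summand $P$ and up to homotopy, exactly the homotopy equivalence $h$ (this is the reason for the choices of $\varphi$, $d_n$, and the enlargement $K'\leadsto K$). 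Combined with the vanishing $H^{i}(G;{}^{\omega}\!\Lambda)=0$ for $1<i<n-1$ (our standing hypothesis) and $H^{1}(G;{}^{\omega}\!\Lambda)\cong F^{1}$, $\widehat H^{n-1}(X;{}^{\omega}\!\Lambda)$ and the higher groups, this pins down the cohomology of $X$.

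Next I would establish (i) and (iii) simultaneously. From $d_n$ being (the inverse double-dual of) the dual of a map whose cokernel-functor value is the equivalence $h$, one gets that $\widehat H^{n}(X;{}^{\omega}\!\Lambda)\cong\coker(d_n^{\ast})$ maps isomorphically onto $\coker(d_1)=\mathbb Z$ — using $\widehat H_1(X)=0$ and $G_1(\mathbf C(\widetilde X))\cong I$ via $\theta$ exactly as in Section 3 — and dually $\widehat H_n(X;\mathbb Z^{\omega})\cong\mathbb Z$, generated by a class $[X]$. The cap product $\underline{\phantom A}\smallfrown[X]$ is then a chain map $F^{\bullet}(\mathbf C(\widetilde X))\to G_{\bullet}(\mathbf C(\widetilde X))$ whose component in the critical degree is, by Lemma 3.3 (Lemma~\ref{lem:nu}) applied to $X$, represented by $\theta\circ G_{-n+1}(\underline{\phantom A}\smallfrown(1\otimes[e]))$ — i.e. by $h$ — hence a homotopy equivalence; in the remaining degrees both sides are controlled by the vanishing $H^{i}(G;{}^{\omega}\!\Lambda)=0$ and by $K$ being aspherical, so the relevant maps are isomorphisms for formal reasons. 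One then invokes the standard criterion (as in \cite{BB2010}, \cite{HJB-BB2010}) that a degree-one map inducing homotopy equivalences on the $G_\bullet$/$F^\bullet$ level is a Poincaré duality structure: cap product with $[X]$ is an isomorphism $H^{r}(X;M)\to H_{n-r}(X;M^{\omega})$ for all coefficient modules $M$ and all $r$, by a five-lemma/universal-coefficient argument once it is known for $M={}^{\omega}\!\Lambda$ in all degrees. This gives (iii), and (i) drops out.

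For (ii), I would observe that $f\colon X\to K=K(G,1)$ restricts to the inclusion on $K^{[n-1]}$ and therefore induces the identity on chain complexes in degrees $\le n-1$; hence $f_{\ast}\colon H_n(X;\mathbb Z^{\omega})\to H_n(G;\mathbb Z^{\omega})$ is the map on $H_n$ of $\mathbb Z^{\omega}\otimes_\Lambda\mathbf C\to \mathbb Z^{\omega}\otimes_\Lambda\mathbf C(\widetilde K)$ induced by the inclusion of chain complexes, and $f_{\ast}[X]$ is the class whose image under the connecting homomorphism $\delta$ of Section 3 is $[d_n(e)]$. Running $\widehat\nu_{\mathbf C(\widetilde K),n-1}$ on this and comparing with the representative $h$ of $\nu_{\mathbf C(\widetilde{K'}),n-1}(\mu)$ — which by the enlargement $K'\leadsto K$ is literally the representative we factored — shows $f_{\ast}[X]$ and $\mu$ have the same image under the (injective-enough) Turaev map data; since $\nu$ detects the class up to the ambiguity killed by the asphericity of $K$, one concludes $f_{\ast}[X]=\mu$. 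Finally, the fundamental group of $X$ is $G$ and its $(n-2)$-type is $K(G,1)$ because $\widetilde X$ is $(n-2)$-connected, so the fundamental triple is exactly $(G,\omega,\mu)$.

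\textbf{Main obstacle.} The crux is the bookkeeping in the previous paragraph's first step: showing that the map $F^{n-1}(\mathbf C(\widetilde X))\to I$ arising from $d_n^{\ast}$ genuinely recovers the homotopy equivalence $h$ (not merely something stably equivalent to it), so that Lemma~\ref{lem:nu} can be quoted verbatim for $X$ and the cap product inherits $h$'s invertibility. This requires carefully tracking the double-dual identifications $^{\omega}\varepsilon$, the inclusion $I\rightarrowtail\Lambda$, the projective summand $P$ (still under the assumption $P$ free here), and the enlargement $K'\leadsto K$ that absorbed the $\Lambda^m$; getting the naturality squares to commute on the nose, rather than up to homotopy, is where the real work lies. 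Everything else — the vanishing-driven degreewise isomorphisms and the final five-lemma upgrade to arbitrary coefficients — is routine given the results already cited from \cite{BB2010} and \cite{HJB-BB2010}.
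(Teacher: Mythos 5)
Your outline follows the paper's route at a high level: compute \( H_n(X;\mathbb Z^\omega) \) from the chain complex, pin down \( f_\ast[X]=\mu \) via injectivity of the Turaev map, and establish duality in the middle degrees from the vanishing hypotheses \( H^i(G;{}^\omega\Lambda)=0 \). The "main obstacle'' you single out --- tracking the double-dual identifications so that \( d_n^\ast \) genuinely recovers \( h \) --- is exactly where the paper spends its effort in part (ii), and is resolved there by a direct unwinding: \( \overline{\psi(d_n x)} = {}^\omega\!\ev\bigl({}^\omega\!\ev^{-1}(x\circ\varphi)\bigr)(\psi) = (i\circ pr_I\circ j)([\psi]) = h([\psi]) \). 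So you have correctly located the crux and the general strategy is sound.

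Two places where the sketch, as written, needs to be tightened. First, you say you would apply Lemma~\ref{lem:nu} to \( X \); but that lemma is stated for a PD\(^n\)-complex, which is precisely what is being proven here. The paper is careful to cite only the \emph{proof} of Lemma~\ref{lem:nu} --- the diagonal-approximation computation goes through for the \( X \) just constructed without first knowing \( X \) satisfies duality --- and then uses the resulting identity \( \nu_{\mathbf C(\widetilde X),n-1}([X]) = [h] \) together with Turaev's injectivity of \( \nu \) to deduce (ii). Using the statement of the lemma would be circular. Second, your treatment of (iii) appeals to a "standard criterion'' that a map inducing homotopy equivalences at the \( F^\bullet/G_\bullet \) level is a Poincaré duality structure, and you describe the remaining degrees as handled "for formal reasons.'' The paper does not invoke such a criterion: the middle degrees \( 0<i<n \) do come from vanishing, but the end degrees require explicit work --- for \( i=n \) one computes \( H^n(\widetilde X;{}^\omega\Lambda) \cong (\Lambda\oplus P)/\im(\varphi) \cong \mathbb Z \) with generator \( [\gamma] \) and checks \( [\gamma]\smallfrown[X] = [e_0] \) via Lemma~4.4 of \cite{BB2010}; for \( i=0 \) one dualizes the resulting two-term chain homotopy equivalence and cites Lemma~2.1 of \cite{BB2010}. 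Similarly, your part (i) needs the paper's explicit kernel computation \( \ker(\varphi^+)\cong\ker(i^+)\cong\Hom_\Lambda(\Lambda,\mathbb Z)\cong\mathbb Z \), which does not "drop out.'' Finally, the phrase "propagates Poincaré duality from \( K \)'' is misleading --- \( K=K(G,1) \) is aspherical, not a PD complex; what propagates are the cohomological vanishing conditions on \( G \), as you use elsewhere.
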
 

\begin{proof}

\textbf{(i)} \ As $\mathbf{C}(\widetilde{X}) = \mathbf{C}$ is a chain complex of finitely generated free $\Lambda$-modules, the natural map
 \begin{align*}
     \eta_{\mathbf{C}} \colon \mathbb{Z}^{\omega} \otimes_{\Lambda} \mathbf{C} \longrightarrow \Hom_{\Lambda}\big(\,^{\omega}\!\Hom_{\Lambda}(\mathbf{C}, \Lambda), \mathbb{Z}\big)
 \end{align*}
 is an isomorphism.
 
 Hence, \( H_{n} \big(X ; \mathbb{Z}^{\omega} \big) = \ker(1 \otimes d_{n}) \cong \ker (\varphi^{+}) \), for 
 \begin{align*}
   \varphi^{+} \colon \Hom_{\Lambda} \big( 
   \Lambda \oplus \Lambda^{q}
   , \mathbb{Z} \big) &\longrightarrow \Hom_{\Lambda}\big( ^{\omega}\!\Hom_{\Lambda}( C_{n-1}(\widetilde{K}^{[n-1]}), \Lambda), \mathbb{Z}\big),\\ 
   \psi \qquad  &\longmapsto \qquad  \psi \circ \varphi
 \end{align*}
 Since, by definition,
 \begin{equation*}
      \varphi := \begin{bmatrix} i & 0 \\ 0 & id \end{bmatrix} 
      \circ j \circ \pi \colon {\color{red}^{\omega}} \Hom_{\Lambda}\big(C_{n-1}(\widetilde{K}) , \Lambda \big) \longrightarrow \Lambda \oplus \Lambda^{q}
 \end{equation*}
we obtain
\begin{equation*}
    \varphi^{+} = \pi^{+} \circ j^{+} \circ  \begin{bmatrix} 
       i & 0 \\
       0 & id 
       \end{bmatrix}^{+}
\end{equation*}

Since both $\pi$ and $j$ are surjective, both $\pi^{+}$ and $j^{+}$ are injective, whence
\begin{align*}
    \ker(\varphi^{+}) & = \ker \left( \begin{bmatrix} 
       i & 0 \\
       0 & id 
       \end{bmatrix}^{+}\right) \\[1ex] 
       & = \ker \left( \begin{bmatrix} 
       i^{+} & 0 \\
       0 & id 
       \end{bmatrix}\right)\\[1ex] 
       & \cong \ker(i^{+})
  \end{align*}
  
  But $I$ is generated by elements $1 - g \ \ (g\in G)$, and, for $\psi \in \Hom_{\Lambda}( \Lambda, \mathbb{Z})$, we have 
  \begin{align*}
      (\psi \circ i)(1-g) &= \psi(1 - g) \\
                                    &= \psi( 1- g.1) \\
                                    & = \psi(1)- g.\psi(1)\\ 
                                    & = 0
  \end{align*}
  Hence, 
  \begin{equation*}
      \ker (\varphi^{+}) \cong \Hom_{\Lambda}( \Lambda, \mathbb{Z}) \cong \mathbb{Z}
  \end{equation*}
  generated by 
  \[
     \aug \circ \, pr_{\Lambda} \colon \Lambda \oplus \Lambda^{q} \longrightarrow \mathbb{Z}
  \]the projection onto the first factor, followed by the augmentation.
  
  Let $[X] = [1 \otimes x] \in H_{n}(X;\mathbb{Z}^{\omega})$ be the homology class corresponding to $ \aug \circ \, pr_{\Lambda}$ under the isomorphism  \( H_{n} \big(X ; \mathbb{Z}^{\omega} \big) = \ker(1 \otimes d_{n}) \cong \ker (\varphi^{+}) \cong \Hom_{\Lambda}( \Lambda, \mathbb{Z})\).
  
  Then $x \in (\Lambda \oplus \Lambda^{q})^{\ast}$ is projection onto the first factor.
  
    \smallskip
  
  \textbf{(ii)\phantom{i}} 
  
  By the proof of Lemma \ref{lem:nu}, $\nu_{\mathbf{C}(\widetilde{X}), n-1}\big([X]\big)$ is represented by
  \begin{equation*}
     F^{n-1} \big(\mathbf{C}(\widetilde{X})\big) \longrightarrow I, \quad [\psi] \longmapsto \overline{\psi(d_{n}(x))}
  \end{equation*}

 Thus, given $\psi \in C_{n-1}(\widetilde{X})^{\ast} = C_{n-1}(\widetilde{K})^{\ast}$,
  \begin{align*}
   \overline{\psi(d_{n}(x))} & = \overline{\psi\big(\,^{\omega}\!\ev^{-1}(\varphi^{\ast}(x))\big)} \\ 
                    & = \overline{\psi\big(\,^{\omega}\!\ev^{-1}(x \circ \varphi)\big)} \\ 
                    & = \,^{\omega}\!\ev \big(\,^{\omega}\!\ev^{-1}(x \circ \varphi)\big)(\psi) \\ 
                    & =( x \circ \varphi)(\psi) \\
                    & = (x \circ \begin{bmatrix} i & 0 \\ 0 & id\end{bmatrix} \circ j \circ \pi)(\psi) \\ 
                    & = (x \circ \begin{bmatrix} i & 0 \\ 0 & id\end{bmatrix} \circ j)([\psi]) \\ 
                    & = (i \circ pr_{I} \circ j)([\psi]))\\
                    & = h([\psi])
  \end{align*}

  Hence, $\nu_{\mathbf{C}(\widetilde{X}), n-1}([X])$ is the homotopy class of $h$ so that 
  \begin{align*}
     \nu_{\mathbf{C}(\widetilde{K}), n-1}(\mu) & = \nu_{\mathbf{C}(\widetilde{X}), n-1}([X]) \\ 
                   & = \nu_{\mathbf{C}(\widetilde{K}), n-1}(f_{\ast}([X]))
  \end{align*}
By Lemma 2.5 in \cite{VGT1990}, $\nu_{\mathbf{C}(\widetilde{K}), n-1}$ is injective, whence $\mu = f_{\ast}([X])$.

\smallskip
\textbf{(iii)} \ Given $1 \le i < n-1$
\begin{equation*}
    H_{i} (\widetilde{X}; \Lambda) = H_{i}({\widetilde{K}}^{[n-1]}; \Lambda) = 0
\end{equation*}
  and, by the definition of $\varphi$,
  \begin{equation*}
      H^{n-1}(\widetilde{X};  \, ^{\omega}\!\Lambda^{\omega}) = 0
  \end{equation*}
  Moreover,
  \begin{align*}
    H^{i}(\widetilde{X}; \, \Lambda^{\omega}) & = H^{i}({\widetilde{K}^{[n-1]}}; \,\Lambda^{\omega}) \\ 
      & \cong H^{i}(G;\Lambda^{\omega}) \\
      & = 0 &&\text{by hypothesis}
  \end{align*}
  Thus
  \[
    \underline{\phantom{A}} \smallfrown (1 \otimes [X]) \colon H^{i}(X; \,^{\omega}\!\Lambda) \longrightarrow H_{n-i}(X ; \Lambda)
  \]
  is an isomorphism for $0 < i < n$, and it remains only to consider $i =0 ,n$.
  
  As $P$ and $\Lambda \oplus P$ are free, $\mathbf{C}(X)$ is a chain complex of free $\Lambda$-modules. Since the (twisted) evaluation map from a finitely generated free $\Lambda$-module to its double dual is an isomorphism, we obtain
  \begin{align*}
     H^{n}(\widetilde{X}; \, ^{\omega}\!\Lambda) & = H_{-n}\big(\,^{\omega}\!\Hom_{\Lambda}(\mathbf{C}(X), \, ^{\omega}\!\Lambda)) \\[1ex]
           & = \raisebox{0.5ex}{$^{\omega}\!\Hom_{\lambda}(C_{n}(X), \, ^{\omega}\!\Lambda)$}\!/\raisebox{-0.3ex}{$\im(\varphi^{\ast})^{\ast}$} \\[1ex]
             & = \raisebox{0.5ex}{$(\Lambda \oplus P)^{\ast\ast}$}\!/\raisebox{-0.3ex}{$\im(\varphi^{\ast})^{\ast}$} \\[1ex]
           & \cong \raisebox{0.5ex}{$\Lambda \oplus P$}\!/\raisebox{-0.3ex}{$\im(\varphi)$} \\
           & \cong \raisebox{0.5ex}{$\Lambda$}/\!\raisebox{-0.3ex}{$I$} \\
           & \cong \mathbb{Z}
  \end{align*}
  
  The class, $[\gamma]$, of the image of $(1,0) \in \Lambda \oplus P$ under the (twisted) evaluation isomorphism generates $H^{n}(X; \, ^{\omega}\!\Lambda)$ and so, by Lemma 4.4 of \cite{BB2010},
  \begin{align*}
      [\gamma] \smallfrown [X] & = [\gamma] \smallfrown [1 \otimes x] \\ 
        & = [\overline{\gamma(x)}. e_{0}] \\ 
        & = [e_{0}] &&\text{as $\gamma(1 \otimes x) = 1$}
  \end{align*}
  where $e_{0} \in C_{0}(X)$ is a chain representing the base point.
  
 Thus, 
 \[
    \underline{\phantom{A}} \smallfrown [X] \colon H^{n}(\widetilde{X} ; \, ^{\omega}\!\Lambda) \longrightarrow H_{0}(\widetilde{X}; \Lambda)
 \]
 is an isomorphism and $\underline{\phantom{A}} \smallfrown (1 \otimes x)$ yields the chain homotopy equivalence
 \begin{equation*}
    \begin{split}
       \begin{CD}
          0 @>>> \im d_{n-1}^{\ast} @>>>  C_{n-1}(X)^{\ast} @>>> C_{n}(X)^{\ast} @>>> 0 \\
          @VVV @VVV @VV{\underline{\phantom{A}} \smallfrown (1 \otimes x)}V @VV{\underline{\phantom{A}} \smallfrown (1 \otimes x)}V @VVV \\
          0 @>>> \im d_{2} @>>>C_{1}(X) @>>> C_{0}(X) @>>> 0
       \end{CD}
    \end{split}
 \end{equation*}
 Applying the functor $^{\omega}\!\Hom_{\Lambda}(\underline{\phantom{A}}\, , \Lambda)$, we obtain a chain homotopy induced by $\big(\underline{\phantom{A}} \smallfrown (1\otimes x)\big)^{\ast}$. Hence, $\big(\underline{\phantom{A}} \smallfrown (1\otimes x)\big)^{\ast}$ induces the{\color{red}{?}} isomorphism
  \[
              H^{0}(X; \Lambda) \longrightarrow H_{n}(X ; \Lambda)
 \]
By Lemma 2.1 in \cite{BB2010}, $\big(\, \underline{\phantom{A}} \smallfrown (1 \otimes x)\big)^{\ast}$ induces an isomorphism in homology if and only if $\underline{\phantom{A}} \smallfrown (1 \otimes x)$ does, whence
\[
\underline{\phantom{A}} \smallfrown [X] \colon H^{0}(X; \, ^{\omega}\Lambda) \longrightarrow H_{n}(X; \Lambda)
\]
is an isomorphism.
\end{proof}

 Suppose now that $P$ is projective, but not free.
 
Then there is a finitely generated $\Lambda$-module $Q$ and a natural number $q$ with $P^{\ast} \oplus Q \cong \Lambda^{q}$.  The natural isomorphisms
 \begin{align*}
  ( \Lambda \oplus P)^{\ast} \oplus \Lambda^{\infty} & \cong  \Lambda^{\ast} \oplus P^{\ast} \oplus(Q \oplus P^{\ast} \oplus \cdots) \\ 
  & \cong \Lambda \oplus (P^{\ast}  \oplus Q \oplus P^{\ast} \oplus Q \cdots) \\ 
  & \cong\Lambda^{\infty} 
 \end{align*}
show that $( \Lambda \oplus P)^{\ast} \oplus \Lambda^{\infty}$ is a free $\Lambda$-module. 
 
 Consider the chain complex $\mathbf{D}$
 \begin{align*}
 0\longrightarrow &( \Lambda \oplus P)^{\ast} \oplus \Lambda^{\infty} \xrightarrow[]{\ \begin{bmatrix} d_{n}  & 0 \\ 0 & id \end{bmatrix} \ } C_{n-1}\Big({\widetilde{K}}^{[n-1]}\Big)\oplus \Lambda^{\infty} 
 \xrightarrow{\ [ d_{n-1} \ 0 ]  \ } \\[1ex] 
 & C_{n-2}\Big({\widetilde{K}}^{[n-1]} \Big)\xrightarrow{\ d_{n-2}\ } C_{n-3}\Big({\widetilde{K}}^{[n-1]} \Big) \longrightarrow \cdots
 \end{align*}

We may attach infinitely many $n$-balls to ${K}^{[n-1]}$ to obtain a $CW$-complex, $K^{\prime}$, whose cellular chain complex coincides with the chain complex $\mathbf{D}$ in dimensions $\le n-1$.

Then  ${\widetilde{K'}}^{[n-1]}$  is $(n-2)$-connected, and the Hurewicz homomomorphism
\[
    h_{q} \colon \pi_{q}\big(\widetilde{K'}^{[n-1]}\big) \longrightarrow H_{q}\big(\widetilde{K'}^{[n-1]}\big)
\] is an isomorphism for $ q \le n-1$. Defining the map
\begin{align*}
      \varphi^{\prime} \colon ( \Lambda \oplus P)^{\ast} \oplus \Lambda^{\infty} &\longrightarrow \ker(d_{n-1}) = H_{n-1}(\widetilde{K'}^{[n-1]}) \xrightarrow[]{\ h_{n-1}^{-1} \ } \pi_{n-1}(\widetilde{K'}^{[n-1]})\\ 
      x \quad & \longmapsto \quad h_{n-1}^{-1} \big( [d_{n}(x)] \big)
\end{align*}
we obtain the homotopy system $Y' = ( \mathbf{D}, \check{\varphi}, K'^{[n-1]})$ of order $n$.

As $D_{i} = 0$ for $i > n$, \ \( \widehat{H}^{n+2}(Y' ; \Gamma_{n}Y') = 0$. 

By Proposition 8.3 in \cite{HJB-BB2010}, there is then a homotopy system $(\mathbf{C}, 0, X')$ of order $n+1$ realising $Y'$, with $X'$ an $n$-dimensional $CW$-complex.
 
Observe that $\mathbf{D}$, the chain complex of $X$, is chain homotopy equivalent to the chain complex $\mathbf{W}$
 \begin{align*}
 \cdots \longrightarrow &P^{\ast} \oplus Q \xrightarrow[]{\ \begin{bmatrix} id & 0 \\ 0 & 0 \end{bmatrix} \ } P^{\ast} \oplus Q \xrightarrow[]{\begin{bmatrix} 0 & 0 \\ 0 & id \end{bmatrix} \ } P^{\ast} \oplus Q \xrightarrow[] {\begin{bmatrix} 0 & 0 \\  0 & 0 \\ 0 & id \end{bmatrix} \ } \\[2ex]
     &(\Lambda \oplus P)^{\ast}\oplus Q \xrightarrow[]{\begin{bmatrix} d_{n} \\ 0 \end{bmatrix}}  C_{n-1}({\widehat{K}}^{[n-1]}) \xrightarrow[]{\ d_{n-1} \ } \\[2ex]
     & C_{n-2}({\widehat{K}}^{[n-1]})  \xrightarrow{\ d_{n-2}\ } C_{n-3}({\widetilde{K}}^{[n-1]} ) \longrightarrow \cdots
 \end{align*}
 
 By  Theorem 2 of \cite{W1966}, there is a $CW$-complex, $X$, with cellular chain complex $\mathbf{W}$, homotopy equivalent to $X^{\prime}$. 
 
  The proof that $X$ realises $(G, \omega, \mu)$ is similar to the proof of Proposition \ref{prop:realisefree}.  
 
 This completes the proof of Theorem A.

 \section{Decomposition as Connected Sum}\label{sect:connsum}
 
Wall constructed a new PD$^{n}$ complex from given ones  using the \emph{connected sum of PD$^{n}$ complexes} (cf \cite{W1967}). This allows  PD$^{n}$ complexes to be decomposed as connected sum of other, simpler PD$^{n}$ complexes.

Take PD$^{n}$ complexes $(X_{k}, \omega_{k}, [X_{k}])$ for $k = 1, 2$.

Then we may express $X_{k}$ as the mapping cone
\begin{equation*}
       X_{k} = X_{k}^{\prime}\cup_{f_{k}}e_{k}^{n}
\end{equation*}
for suitable $f_{k} \colon S^{n-1} \longrightarrow X_{k}^{\prime}$.

Here, $X_{k}^{\prime}$ is an $(n-1)$-dimensional $CW$-complex when $n > 3$, and when $n=3$, $X_{k}^{\prime}$ is 3-dimensional with $H^{3}(X_{k}^{\prime}; B) = 0$ for all coefficient modules, $B$.

For $k = 1, 2$, let  $in_{k} \colon X_{k}^{\prime} \longrightarrow X_{1}^{\prime} \vee X_{2}^{\prime}$ be the canonical inclusion of the $k^{\text{th}}$ summand and put
\begin{equation*}
     \widehat{f}_{k} :=\iota_{k} \circ f_{k} \colon S^{n-1} \longrightarrow X_{1}^{\prime} \vee X_{2}^{\prime}
\end{equation*} 
so that $\widehat{f}_{k}$ determines an element of $\pi_{n-1}\big(X_{1}^{\prime} \vee X_{2}^{\prime}\big)$

Let  $f_{1} + f_{2} \colon S^{n-1} \longrightarrow X_{1}^{\prime} \vee X_{2}^{\prime}$ represent the homotopy class $[\widehat{f}_{1}] + [\widehat{f}_{2}]$. 

Then the connected sum,  $X = X_{1} \# X_{2} $, of $X_{1}$ and $X_{2}$ is  the mapping cone of $f_{1} + f_{2}$,
\begin{equation*}
     X_{1} \# X_{2} := \big(X_{1}^{\prime} \vee X_{2}^{\prime}\big) \cup_{f_{1}+f_{2}} e^{n}
\end{equation*}

It follows from the Seifert-van Kampen Theorem that 
\begin{equation*}
    \pi_{1}(X) = \pi_{1}(X_{1}) \ast \pi_{1}(X_{2})
\end{equation*}

The canonical inclusion $in_{k} \colon \pi_{1}(X_{k}) \longrightarrow \pi_{1}(X)$ induces a (left, resp.~right) $\mathbb{Z}[\pi_{1}(X_{k})]$-module structure on any  (left resp.~right) $\Lambda = \mathbb{Z}[\pi_{1}(X)]$-module. In particular $\Lambda$ is a $\pi_{1}(X_{k})$-bimodule.

By the universal property of the free product, the morphisms $\omega_{X_{k}} = in_{k}^{\ast}\big(\omega_{X}\big)$ uniquely determine a morphism
\begin{equation*}
   \omega_{X} \colon \pi_{1}(X) \longrightarrow \raisebox{0.5ex}{$\mathbb{Z}$}/\raisebox{-0.5ex}{$2\mathbb{Z}$}
\end{equation*}

Given  left $\mathbb{Z}[\pi_{1}(X_{k})]$-modules $M$, $N$ and a $\mathbb{Z}[\pi_{1}(X)]$-morphism $\alpha \colon M \longrightarrow N$, put
\begin{align}\label{dfn:thefunctorL}
\begin{split}
   L_{k}M &:= \Lambda \otimes_{\mathbb{Z}[\pi_{1}(X_{k})]} M \\
   L_{k}\alpha & := id_{\Lambda } \otimes _{\mathbb{Z}[\pi_{1}(X_{k})]}  \alpha \colon \Lambda\otimes_{\mathbb{Z}[\pi_{1}(X_{k})]} M \longrightarrow \Lambda \otimes_{\mathbb{Z}[\pi_{1}(X_{k})]} N
   \end{split}\tag{$\ast$}
\end{align}

Plainly,  $L_{k}$ is a functor from the category of left $\mathbb{Z}[\pi_{1}(X_{k})]$-modules to the category  of left $\Lambda$-modules.

Let $\mathbf{C}(\widetilde{X})$ be the $\mathbb{Z}[\pi_{1}(X)]$-cellular chain complex of the universal cover of $X$, and $\mathbf{B}$ the subcomplex containing the $n$-cell attached by $f_{1} + f_{2}$.

Then $\mathbf{B}$ is a Poincar{\'{e}} duality chain complex (\cite{HJB-BB2010} p.2361) and it follows Theorem 2.3 of \cite{BB2010} that $ L_{k}\big(\mathbf{C}(\widetilde{X}_{k})\big)$ is also a Poincar{\'{e}} duality chain complex.

Let $x$ be the chain representing the $n$-cell attached by $f_{1} + f_{2}$. Repeated application of Theorem 2.3 of \cite{BB2010} shows that $L_{1}\big(\mathbf{C}(\widetilde{X}_{1})\big) + L_{2}\big(\mathbf{C}(\widetilde{X}_{2})\big)$ is a Poincar{\'{e}} duality chain complex.

Hence $(X, \omega_{X}, [1 \otimes x])$ is a Poincar{\'{e}} duality complex. This is the \emph{connected sum of $(X_{1}, \omega_{X_{1}}, [X_{1}])$ and $(X_{2}, \omega_{X_{2}}, [X_{2}])$}, introduced by Wall in \cite{W1967}.

We have seen that a necessary condition for a Poincar{\'{e}} duality complex to be a  connected sum of Poincar{\'{e}} duality complexes is that its fundamental group be the free product of groups. We show that, in our context,  this condition is also sufficient.

\begin{mainb}
A PD$^{n}$ complex with $(n-2)$-connected universal cover, decomposes as  a connected sum if and only if its fundamental group decomposes as a free product of groups.
\end{mainb}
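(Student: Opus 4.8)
The plan is to prove the nontrivial direction: if $X$ is a PD$^n$-complex with $(n-2)$-connected universal cover and $\pi_1(X) = G_1 \ast G_2$, then $X$ decomposes as a connected sum $X_1 \# X_2$ where each $X_i$ is a PD$^n$-complex with $(n-2)$-connected universal cover and $\pi_1(X_i) = G_i$. The strategy is to construct the fundamental triples of the summands directly and invoke Theorem A to realise them.

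\medskip

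First I would pass to the fundamental triple $(G_1 \ast G_2, \omega, \mu)$ of $X$, using that the $(n-2)$-type of $X$ is the Eilenberg--Mac Lane space $K(G_1 \ast G_2, 1)$. Set $\omega_k := in_k^\ast(\omega) \in H^1(G_k; \mathbb{Z}/2\mathbb{Z})$. The key algebraic input is that $H_n(G_1 \ast G_2; \mathbb{Z}^\omega) \cong H_n(G_1; \mathbb{Z}^{\omega_1}) \oplus H_n(G_2; \mathbb{Z}^{\omega_2})$ for $n \ge 2$ (a Mayer--Vietoris argument over the free product decomposition, or equivalently the fact that the integral group ring of a free product gives a chain-level splitting in positive degrees); write $\mu = (\mu_1, \mu_2)$ accordingly. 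Each $G_k$ inherits type FP$_n$ from $G$ (being a retract of $G$, or via the standard resolution for a free product), and one checks $H^i(G_k; \,^{\omega_k}\!\mathbb{Z}[G_k]) = 0$ for $1 < i < n-1$: this follows because $\mathbb{Z}[G_1 \ast G_2]$ is free as a $\mathbb{Z}[G_k]$-module, so $H^i(G_k; \,^{\omega}\!\mathbb{Z}[G_1\ast G_2])$ is a sum of copies of $H^i(G_k; \,^{\omega_k}\!\mathbb{Z}[G_k])$, and the left-hand side vanishes by Lemma~\ref{lem:Hi=0} applied to $X$. Then I need the compatibility of the Turaev map with the free product splitting: that $\nu_{\mathbf{C}(\widetilde{K(G,1)}), n-1}(\mu)$ being a homotopy equivalence of $\Lambda$-modules forces each $\nu_{\mathbf{C}(\widetilde{K(G_k,1)}), n-1}(\mu_k)$ to be a homotopy equivalence of $\mathbb{Z}[G_k]$-modules --- this uses the functoriality of $\nu$ together with the functor $L_k = \Lambda \otimes_{\mathbb{Z}[G_k]} -$ and the fact that $L_k$ carries the relevant chain complex of $K(G_k,1)$ to a summand of that of $K(G,1)$ (compatibly with $F^{n-1}$ and with the augmentation ideals, since $L_k(I_{G_k})$ sits inside $I_G$). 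By Theorem A, each triple $(G_k, \omega_k, \mu_k)$ is realised by a PD$^n$-complex $X_k$ with $(n-2)$-connected universal cover.

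\medskip

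It then remains to identify $X_1 \# X_2$, formed as above from $X_1$ and $X_2$, with $X$ up to homotopy equivalence. Both have fundamental group $G_1 \ast G_2$, orientation character $\omega$ (by the universal property of the free product applied to $\omega_1, \omega_2$), and $(n-2)$-connected universal cover. For the fundamental classes: the connected sum construction yields fundamental class $[1 \otimes x]$ with $f_\ast([X_1 \# X_2]) = in_{1\ast}(\mu_1) + in_{2\ast}(\mu_2) = \mu$ under the splitting of $H_n(G; \mathbb{Z}^\omega)$, because the chain $x$ is the formal sum of the top cells of $X_1$ and $X_2$ pushed into $X_1' \vee X_2'$. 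Hence $X_1 \# X_2$ and $X$ have isomorphic fundamental triples, and by the Baues--Bleile classification (the already-verified first part of Turaev's conjecture, recalled in the introduction) they are orientedly homotopy equivalent. The converse direction is precisely the remark preceding the statement: a connected sum of PD$^n$-complexes has fundamental group a free product by Seifert--van~Kampen. This completes the proof.

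\medskip

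The main obstacle I anticipate is the middle step: showing the Turaev-map condition descends to each free factor. One must track carefully how $\nu$ behaves under the change-of-rings functor $L_k$ --- in particular that $F^{n-1}(\mathbf{C}(\widetilde{K(G,1)})) \cong L_1 F^{n-1}(\mathbf{C}(\widetilde{K(G_1,1)})) \oplus L_2 F^{n-1}(\mathbf{C}(\widetilde{K(G_2,1)}))$ and that under this decomposition $\nu(\mu)$ becomes the ``block sum'' $L_1\nu(\mu_1) \oplus L_2\nu(\mu_2)$ (mapping into $I_G \cong L_1 I_{G_1} \oplus L_2 I_{G_2}$, using $I_{G_1 \ast G_2} \cong (\Lambda \otimes_{\mathbb{Z}[G_1]} I_{G_1}) \oplus (\Lambda \otimes_{\mathbb{Z}[G_2]} I_{G_2})$). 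Granting that, a homotopy equivalence that is block-diagonal splits into homotopy equivalences on each block after passing to the stable category over each $\mathbb{Z}[G_k]$, since $L_k$ is exact and sends projectives to projectives; this is where Theorem~2.3 and the lemmas of \cite{BB2010} on the functor $L_k$ do the work, exactly as in the construction of the connected-sum Poincar\'e duality chain complex above.
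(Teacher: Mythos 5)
Your proposal follows essentially the same route as the paper's proof: split the fundamental triple $(G_1\ast G_2,\omega,\mu)$ into triples $(G_k,\omega_k,\mu_k)$ using the decompositions $H_n(G;\mathbb{Z}^\omega)\cong H_n(G_1;\mathbb{Z}^{\omega_1})\oplus H_n(G_2;\mathbb{Z}^{\omega_2})$, $F^{n-1}(\mathbf{C}(\widetilde{K}))\cong L_1F^{n-1}(\mathbf{C}(\widetilde{K}_1))\oplus L_2F^{n-1}(\mathbf{C}(\widetilde{K}_2))$ and $I_G\cong L_1 I_{G_1}\oplus L_2 I_{G_2}$, observe that the Turaev map becomes block-diagonal and so each block is a homotopy equivalence over $\mathbb{Z}[G_k]$, realise each triple by Theorem A, and identify $X$ with $X_1\# X_2$ via the classification theorem. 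The only noteworthy differences are cosmetic: you explicitly verify the FP$_n$ and cohomology-vanishing hypotheses of Theorem A for $G_k$ (via retracts and a Mayer--Vietoris argument over the free product) where the paper leaves this implicit, while for the key claim that a block-diagonal stable isomorphism forces each block to be a stable isomorphism over the respective factor ring you gesture at exactness of $L_k$ where the paper instead defers to Turaev's argument on pp.~269--270 of \cite{VGT1990}.
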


\begin{proof}
   It only remains to prove sufficiency.
   
   Let $(X, \omega_{X}, [X])$ be a PD$^{n}$ complex with $(n-2)$-connected universal cover such that 
   \begin{equation*}
     \pi_{1}(X) = G_{1} \ast G_{2}
   \end{equation*}
with $G_{1}, G_{2}$ groups.

As $\pi_{1}(X)$ is finitely presentable, so are $G_{1}$ and $G_{2}$.

For $j = 1,2$, let   $K_{j} = K(G_{j};1)$ be an Eilenberg-Mac Lane space with finite 2-skeleton. 

Then $K_{1} \vee K_{2}$ is an Eilenberg-Mac Lane space, $K(G_{1} \ast G_{2};1)$, and
\begin{equation*}
    H_{n} (K; \mathbb{Z}^{\omega}) = H_{n}(K_{1}; \mathbb{Z}^{\omega_{1}}) \oplus H_{n}(K_{n};\mathbb{Z}^{\omega_{2}})
\end{equation*}
where $\omega_{j} \in H^{1}(K_{j}; \raisebox{0.3ex}{$\mathbb{Z}$}/\raisebox{-0.2ex}{$2\mathbb{Z}$}) $ is the restriction of the orientation character $ \omega \in H^{1}(K; \raisebox{0.3ex}{$\mathbb{Z}$}/\raisebox{-0.2ex}{$2\mathbb{Z}$})$.

Thus, $\mu_{X} = \mu_{1} + \mu_{2}$, with $\mu_{j} \in H_{n} (K_{j} ; \mathbb{Z}^{\omega_{j}})$ for $j = 1,2$.

By the discussion above, if the PD$^{n}$ complex $X_{j}$, with $(n-2)$-connected universal cover realises the fundamental triple $(G_{j}, \omega_{j}, \mu_{j})$, then the connected sum of $X_{1}$ and $X_{2}$ realises the fundamental triple of $X$, whence, by the Classification Theorem in \cite{HJB-BB2010}, $X$ is orientedly homotopy equivalent to $X_{1} \# X_{2}$.

Hence, it is sufficient to construct realisations of $(G_{j}, \omega_{j}, \mu_{j}) \ (j=1,2).$ 

Let $L_{j}$ be the functor defined in (\ref{dfn:thefunctorL}), so that for $ i \ge 1$
\begin{equation*}
    C_{i}(\widetilde{K}) = L_{1}\big(C_{i}(\widetilde{K}_{1})\big) \oplus L_{2}\big(C_{i}(\widetilde{K}_{2})\big)
\end{equation*}

It follows that 
\begin{equation*}
   F^{n-1}\big(\mathbf{C}(\widetilde{K})\big) = L_{1}\Big(F^{n-1}\big(\mathbf{C}(\widetilde{K}_{1})\big)\Big) \oplus L_{2}\Big(F^{n-1}\big(\mathbf{C}(\widetilde{K}_{2})\big)\Big)
\end{equation*}
and
\begin{equation*}
  I \big(\pi_{1}(X)\big) = L_{1}\big(I(G_{1})\big) \oplus L_{2} \big(I(G_{2})\big)
\end{equation*}
where the canonical inclusion is given by
\begin{equation*}
    L_{j}\big(I(G_{j})\big) \longrightarrow I(G), \quad \mu \otimes \lambda \longmapsto \mu\lambda
\end{equation*}
for $\mu \in \mathbb{Z}[\pi_{1}(X)]$ and $\lambda \in I(G)$ viewed as an element of $I\big(\pi_{1}(X)\big)$.

Let \( \varphi_{j} \colon F^{n-1} \big(\mathbf{C}(\widetilde{K}_{i})\big) \rightarrowtail I(G_{j}) \) be a $\mathbb{Z}[G_{j}]$-morphism representing the class $ \nu_{C(\widetilde{K}_{j}), n-1}(\mu_{j})$.

Then the class, $ \nu_{C(\widetilde{K}, i-1}(\mu)$ of homotopy equivalences is represented by
\[
  \begin{CD}
  L_{1}\Big(F^{n-1}\big(\mathbf{C}(\widetilde{K}_{1}) \big) \Big) \oplus L_{2}\Big(F^{n-1}\big(\mathbf{C}(\widetilde{K}_{2}) \big) \Big)& \quad = \ F^{n-1}\big(\mathbf{C}(\widetilde{K}) \big) \\
  @VV{L_{1}(\varphi_{1}) \oplus L_{2}(\varphi_{2})}V\\
  L_{1}\big(I(G_{1}) \oplus L_{2} \big(I(G_{2})\big) &  = \  I(G)\phantom{ABC}
  \end{CD}
\] 
and it follows from the proof of the analogous proposition for $n = 3$ on pp.269--270 in \cite{VGT1990}, that $\varphi_{j}$ is a homotopy equivalence of modules. 

By Theorem A, $(G_{j}, \omega_{j}, \mu_{j})$ is realised by a PD$^{n}$ complex, $X_{j}$, with $(n-2)$-connected universal cover. 
\end{proof}


\begin{thebibliography}{99}

\bibitem{HJB-1991} Baues, H.-J., \ \emph{Combinatorial Homotopy and 4-Dimensional Complexes} De Gruyter Expositions in Mathematics No. 2, 1991

\bibitem{HJB-BB2010} Baues, H.-J., and Bleile, B., \ \emph{Poincar{\'{e}} Duality Complexes in Dimension Four} Algebraic \& Geometric Topology {\bf 8}, pp. 2355-2389, 2008. Also available from {\url{http://www.msp.warwick.ac.uk/agt/2008/08-04/p085.xhtml}}

\bibitem{BB2010} Bleile, B., \ \emph{Poincar{\'{e}} Duality Pairs of Dimension Three} Forum Math. \textbf{22} (2010),  277-301 

\bibitem{H1977} Hendriks, H., \ \emph{Obstruction Theory in 3--Dimensional Topology: An
Extension Theorem}, Journal of the London Mathematical Society (2) {\bf{16}} (1977), 160--164.

\bibitem{VGT1990} Turaev, V.G., \ \emph{Thee Dimensional Poincar{\'{e}} Duality Complexes: Homotopy Classification and Splitting} Math{.} USSR Sbornik Vol{.} 67 (1990) No.1. 261--282



\bibitem{W1965} Wall, C.T.C.,  \ \emph{Finitness Conditions for $CW$-Complexes} Annals of Mathematics, 2$^{\text{nd}}$ Series \textbf{81} (1965) 56--69 

\bibitem{W1966} Wall, C.T.C., \ \emph{Finiteness Conditions for CW--Complexes, II}, Proceedings of the
Royal Society Series A {\bf{295}} (1966), 129--139


\bibitem{W1967} Wall, C.T.C., \ \emph{Poincar{\'{e}} Complexes I} Ann. Math. \textbf{86} (1967), 213--245


\bibitem{W2004} Wall, C.T.C.,  \ \emph{Poincar{\'{e}} Duality in Dimension 3} Proceedings of the Casson Fest,  Geometry and Topology Monographs, Vol. 7 (20014) 1--26

\end{thebibliography}
\end{document}